\newtheorem{theorem}{Theorem}[section]
\newtheorem{conjecture}{Conjecture}
\newtheorem{lemma}[theorem]{Lemma}
\newtheorem{proposition}[theorem]{Proposition}
\newcommand{\fq}{\mathbb{F}_q}
\newcommand{\N}{\mathbb{N}}
\newcommand{\Z}{\mathbb{Z}}
\newcommand{\Pp}{\mathbb{P}}
\newcommand{\Phat}{\mathbb{\hat{\mathbb{P}}}}
\newcommand{\eim}{\mathbf{e}_i^m}
\def\Fq{{\mathbb F}_q}
\def\AA{{\mathbb A}}
\def\FF{{\mathbb F}}
\def\PP{{\mathbb P}}
\newcommand{\V}{\mathsf{V}}
\newcommand{\Ze}{\mathsf{Z}}
\newcommand{\X}{\mathcal{X}}
\newcommand{\Hy}{\mathcal{H}}
\begin{document}

\title[Zeros of Homogeneous Polynomials over Finite Fields]{Maximum Number of Common Zeros of  Homogeneous Polynomials over Finite Fields}
\author{Peter Beelen}
\address{Department of Applied Mathematics and Computer Science, \newline \indent
Technical University of Denmark, DK 2800, Kgs. Lyngby, Denmark}
\email{pabe@dtu.dk}

\author{Mrinmoy Datta}
\address{Department of Applied Mathematics and Computer Science, \newline \indent
Technical University of Denmark, DK 2800, Kgs. Lyngby, Denmark}
\email{mrinmoy.dat@gmail.com}

\author{Sudhir R. Ghorpade}
\address{Department of Mathematics,
Indian Institute of Technology Bombay,\newline \indent
Powai, Mumbai 400076, India.}
\email{srg@math.iitb.ac.in}

\subjclass[2010]{Primary 14G15, 11T06, 11G25, 14G05 Secondary 51E20, 05B25}

\date{}

\begin{abstract}
About two decades ago, Tsfasman and Boguslavsky conjectured a formula for the
maximum number of common zeros that $r$  linearly independent homogeneous polynomials
of degree $d$ in $m+1$ variables with coefficients in a finite field with $q$ elements can have in the corresponding $m$-dimensional projective space. 
Recently, it has been shown by Datta and Ghorpade that this conjecture is valid if $r$ is at most $m+1$ and can be invalid otherwise. Moreover a new conjecture was proposed for many values of $r$ beyond $m+1$. In this paper, we prove that this new conjecture holds true for several values of $r$. In particular, this settles the new conjecture completely when $d=3$. Our result also includes the positive result of Datta and Ghorpade as a special case. Further, we determine the maximum number of zeros in certain cases not covered by the earlier conjectures and results, namely, the case of $d=q-1$ and of $d=q$. All these results are directly applicable to the determination of the maximum number of points on sections of Veronese varieties by linear subvarieties of a fixed dimension, and also the determination of generalized Hamming weights of projective Reed-Muller codes.
\end{abstract}

\maketitle

\section{Introduction}
\label{sec:in}

Let $d,m$ be positive integers and let $\Fq$ denote the finite field with $q$ elements.  Let  us denote by $S$ the ring $\Fq[X_0,X_1, \dots , X_m]$ 
of polynomials in $m+1$ variables with coefficients in $\Fq$ and by $S_d$ 
its $d$th graded component, i.e., let $S_d$ be the space of all homogeneous polynomials in $S$ of degree $d$ (including the zero polynomial). Given any homogeneous polynomials $F_1, \dots , F_r \in S$, let
$ \V(F_1, \dots , F_r)$ denote the corresponding projective algebraic variety over $\Fq$, i.e., the set of all
$\Fq$-rational common zeros of $F_1, \dots , F_r$ in the $m$-dimensional projective space $\PP^m$.
Now fix a positive integer $r \le \dim_{\Fq}S_d = \binom{m+d}{d}$. We are
primarily interested in determining
\begin{equation}
\label{erdm}
e_r(d,m): = \max\left\{ \left| \V(F_1, \dots , F_r) \right| : F_1, \dots , F_r \in S_d \text{ linearly independent}\right\} .
\end{equation}
The first nontrivial case is $r=1$ and here
it was conjectured by Tsfasman in the late 1980's that
\begin{equation}
\label{TConj}
e_1(d,m) = dq^{m-1} + p_{m-2} \quad \text{whenever } d\le q,
\end{equation}
where for any integer $k$,
\begin{equation*}
p_k: = \begin{cases} |  \PP^k(\Fq)| = q^k + q^{k-1} + \dots + q + 1 
& \text{ if } k \ge 0, \\
0 &  \text{ if } k<0. \end{cases}
\end{equation*}
The conjecture was proved in the affirmative by Serre \cite{Se} and, independently, by S{\o}rensen  \cite{Soe} in 1991. Later in 1997,  Boguslavsky \cite{Bog} showed that
$$
e_2(d,m)=(d-1)q^{m-1}+ q^{m-2} + p_{m-2}  \quad \text{whenever } 1< d < q-1.
$$
In the same paper, Boguslavsky \cite{Bog} made several conjectures, ascribing some of them to Tsfasman. Surmising from the conjectural statements and results in \cite{Bog}, one arrives at the
Tsfasman-Boguslavsky Conjecture (TBC), which states that
\begin{equation*}
e_r(d,m): = \displaystyle{   p_{m-2j} + \sum_{i=j}^m} \nu_i (p_{m-i} - p_{m-i-j}) \quad \text{whenever } 1\le d < q-1,
\end{equation*}
where $(\nu_1, \dots, \nu_{m+1})$ is the $r$th element in descending lexicographic order among
$(m+1)$-tuples $(\alpha_1, \dots , \alpha_{m+1})$ of nonnegative integers satisfying
$\alpha_1+ \cdots + \alpha_{m+1} = d$, and 
$j := \min\{i : \nu_i \ne 0\}$.

 The conjectural formula above for $e_r(d.m)$ was motivated by the computations of Boguslavsky \cite[Lem.~4]{Bog} for the number of $\Fq$-rational points of the so-called linear $(r,m,d)$-configurations, and a conjecture of Tsfasman \cite[Conj.~1]{Bog}. For details about these, 
  see \cite[\S\,2]{Bog} and \cite[Rem.~3.6]{DG1}.
The TBC remained open for a considerably long time. However, two important developments took place shortly after Boguslavsky's paper was published. First, working on a seemingly unrelated question (and unaware of the TBC), Zanella \cite{Z} determined $e_r(2,m)$ completely. Second, Heijnen and Pellikaan \cite{HP}, found exact formulae for the affine analogue of \eqref{erdm}, namely,
\begin{equation*}
e_r^{\AA}(d,m): = \max\left\{ \left| \Ze(f_1, \dots , f_r) \right| :  f_1, \dots , f_r \in T_{\le d} \; \text{ linearly independent}\right\} ,
\end{equation*}
where $T$ denotes the polynomial ring $\Fq[x_1, \dots , x_m] $ in $m$ variables over $\Fq$ and $T_{\le d}\; $
the set of polynomials in $T$ of degree $\le d$, and for any $f_1, \dots , f_r\in T$,  $\Ze(f_1, \dots , f_r)$ denotes the set of all $\Fq$-rational common zeros of $f_1, \dots , f_r$ in the $m$-dimensional affine space $\AA^m$. 
The result of Heijnen-Pellikaan can be stated as follows.
\begin{equation}
\label{Hrdm}
e_r^{\AA}(d,m) = H_r(d,m):= \sum_{i=1}^m \beta_i q^{m-i} \quad \text{whenever } 1\le d < q, \; m\ge 1, \text{ and } 1\le r \le \binom{m+d}{d},
\end{equation}
where $(\beta_1, \dots, \beta_{m})$ is the $r$th element in descending lexicographic order among  all
$m$-tuples $(\gamma_1, \dots , \gamma_{m})$ of nonnegative integers satisfying
$\gamma_1+ \cdots + \gamma_{m} \le d$.

Recently, it was shown 
in \cite{DG1} that the TBC is false, in general, by showing that $e_r(d,m)$ can be strictly smaller than the conjectured quantity if $r>m+1$. Further, in \cite{DG} 
it was shown that the TBC holds in the affirmative if $r\le m+1$; this gives
\begin{equation}
\label{DGrdm}
e_r(d,m)=(d-1)q^{m-1}+ \lfloor q^{m-r} \rfloor + p_{m-2}  \quad \text{if } 1< d < q-1 \text{ and } r \le m+1.
\end{equation}
While this settles in a way the Tsfasman-Boguslavsky Conjecture, there still remains the question of determining $e_r(d,m)$ in all the remaining cases. 
In fact, besides \eqref{TConj}, \eqref{DGrdm}, and the result of Zanella for $e_r(2,m)$ mentioned earlier (see Theorem \ref{thm:Z}), the only other known results about $e_r(d,m)$ are the following. First, it is easy to determine $e_r(d,m)$ for the initial values of $d$ or $m$ for all permissible $r$, that is, for $1\le  r \le  \binom{m+d}{d}$. More precisely, we have
\begin{equation}
\label{ermone}
e_r(1,m) = p_{m-r} \text{ for } 1\le r \le m+1.\quad \text{and} \quad e_r(d,1) =  d-r+1 \quad \text{for } 1\le r \le d+1 \text{ and } d\le q; 
\end{equation}
see, for instance,  \cite[\S\,2.1]{DG}. It is not difficult to determine $e_r(d,m)$ for some terminal values of $r$:
\begin{equation}
\label{elastr}
e_r(d,m) = \textstyle{\binom{m+d}{d}} - r \quad \text{for } \,  \binom{m+d}{d} - d \le r \le  \binom{m+d}{d} \, \text{ and } \, d< q-1.
\end{equation}
A proof can be found in \cite[Thm. 4.7]{DG2}.
At any rate, the results obtained thus far do not yield the exact values of
\begin{itemize}
\item
$e_r(d,m)$ whenever $m+1 < r < \binom{m+d}{d} - d$ and $2< d < q-1$
\item
$e_r(d,m)$ whenever $1 < r \le \binom{m+d}{d}$ and $d \ge q-1$
\end{itemize}
Note also that the case $d\ge q+1$ is trivial for many values of $r$ (see \cite[Rem. 6.2]{DG} for more details).
But the cases $d=q-1$ and $d=q$ were unresolved for most values or $r$ and $m$,
and it is conceivable that the TBC may even be valid in some of them, at least when $r\le m+1$. 
For going beyond $r=m+1$, 
a conjecture that ameliorates the TBC was made in \cite{DG} for many (but not all) values of $r$ and for values of $d$ up to and including $q-1$. The conjecture simply states that
\begin{equation}
\label{DGdm}
e_r(d,m) = H_r(d-1,m) + p_{m-1} \quad \text{if } 1< d \le q-1 \text{ and } r \le {{m+d-1}\choose{d-1}}.
\end{equation}
where $H_r(d-1,m)$ is as in \eqref{Hrdm} except with $d$ replaced by $d-1$.

We can now describe the contents of this paper. Our main result (Theorem \ref{thm:main}) is an affirmative solution of the new conjecture \eqref{DGdm} when $d>2$ and $r \le {{m+2}\choose{2}}$. In particular, this completely proves the conjectural formula \eqref{DGdm} when $d=3$. Furthermore, while our methods are partly inspired by those in \cite{DG}, the results of \cite{DG} are not used directly. As such our results yield \eqref{DGrdm} as a corollary. In fact, we do a little better, since the case $d=q-1$ is also covered, and moreover, the proof is somewhat simpler. Our second main result (Theorems  \ref{thm:erdm3}
and \ref{pro:erdm4}) is the determination of $e_r(d,m)$ in the case $d=q$ and $1\le r \le m+1$. The result matches with the answer predicted by the TBC as well as \eqref{DGrdm} and \eqref{DGdm} when $r=1$ and $r=m+1$, but not otherwise.

The key ingredients in our proofs  are as follows. We make
use of the nontrivial results of Heijnen and Pellikaan \cite{HP} as well as Zanella \cite{Z}. 
 In addition, we utilize an inequality of Serre/S{\o}rensen \cite{Se, Soe}, a variant of B\'{e}zout's theorem by Lachaud and Rolland \cite{LR},
 a simple lemma given by Zanella \cite{Z} (see also \cite[Rem. 2.3]{DG1}),
 and an inequality of Homma and Kim \cite{HK} about the maximum number of points on a hypersurface without an $\Fq$-linear component. Another important ingredient in our proof is the use of a quantity that we call the $t$-invariant associated to a linear space of homogeneous polynomials of the same degree. This notion can be traced back to the proof of \cite[Thm. 5.1]{DG2} in a special case, but here it is used more systematically.

We remark here that the determination of $e_r(d,m)$ is equivalent to the determination of the maximum number of $\Fq$-rational points on linear sections ${\mathscr{V}_{m,d}}\cap L$ of the Veronese variety ${\mathscr{V}_{m,d}}$ corresponding to the $d$-uple embedding of $\PP^m$ in $\PP^{M-1}$, where $M=  \binom{m+d}{d}$ and where $L$ varies over linear subvarieties of  $\PP^{M-1}$ of codimension $r$. Moreover, finding
$e_r(d,m)$ is essentially the same as finding the $r$th generalized Hamming weight of the projective Reed-Muller code ${\mathrm{PRM}}_q(d,m)$ of order $d$ and length $p_m$. Also, results on the determination of $e_r(d,m)$ complement the recent result of Couvreur \cite{C} on the number of points of projective varieties of given dimensions and degrees of its irreducible components.
These connections are explained in \cite{DG1,DG2}, and one may refer to them for more details on these aspects.
\section{Preliminaries}
\label{sec:Prelim}

Fix for the remainder of this paper a prime power $q$ and positive integers $d,m,r$. In subsequent sections and subsections, some further assumptions on $d$ or $m$ or $r$ will be made, depending on the context. For the convenience of the reader, the basic underlying assumptions, if any, will be specified in the ``context'' mentioned at the beginning of the section or subsection. We will denote by $\N$ the set of all nonnegative integers and by $\N^m$ the set of $m$-tuples of nonnegative integers. 
We will continue to use the notations introduced in the previous section.
In particular, given any subset $W$ of $S:=\fq[X_0, X_1,\dots,X_m]$, we denote by $\V(W)$ the set of $\Fq$-rational points of the corresponding projective variety in  $\PP^m$, i.e., $\V(W):=\{P\in \PP^m(\Fq): F(P)=0 \text{ for all } F\in W\}$.  If $W=\{F_1, \dots , F_r\}$ or if $W$ is a $\Fq$-linear subspace of $S$ spanned by $F_1, \dots , F_r$, then we
may write $\V(F_1, \dots , F_r)$ for $\V(W)$.
Likewise, given any subset $U$ of $T:=\fq[X_1,\dots,X_m]$, we shall denote by $\Ze(U)$ the
set $\{P\in \AA^m(\Fq): f(P)=0 \text{ for all } f\in U\}$. If $U=\{f_1, \dots , f_r\}$ or if $U$ is a 
subspace of $T$ spanned by $f_1, \dots , f_r$, then we may write $\Ze(f_1,\dots,f_r)$ for $\Ze(U)$. Note that we use the word \emph{algebraic variety} as synonymous with \emph{algebraic set}, i.e., a variety need not be irreducible. When we speak of geometric attributes such as dimension or degree of an (affine or projective) algebraic variety such as $\V(W)$ or $\Ze(U)$, it will always be understood that it is the same as the dimension or degree of the corresponding variety over an algebraic closure $\overline{\FF}_q$ of $\Fq$.
%

\subsection{Projective Hypersurfaces and Affine Varieties}
\label{subsec:2.1}
We 
recall here several results from the literature that we will need later on. Let us begin with the
result of Serre \cite{Se} and S{\o}rensen \cite{Soe} (see also \cite{DG1}) that was mentioned in the Introduction.

\begin{theorem}
\label{thm:SerreSorr}
Let $F$ be any nonzero homogeneous polynomial in $S$ of degree $d$. Then
$$
|\V(F)| \le dq^{m-1} + p_{m-2}.
$$
Moreover $e_1(d,m)=dq^{m-1}+p_{m-2}$ whenever $d \le q$.
\end{theorem}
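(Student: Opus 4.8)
The plan is to prove the two assertions separately: the upper bound $|\V(F)|\le dq^{m-1}+p_{m-2}$ for every nonzero $F\in S_d$, and then its sharpness when $d\le q$. I would dispose of the sharpness first, since it only needs an explicit example. Assuming $d\le q$, choose distinct $a_1,\dots,a_d\in\Fq$ and set $F=\prod_{i=1}^d(X_0-a_iX_1)$, a union of $d$ distinct hyperplanes all containing the common linear subspace $\Lambda=\V(X_0,X_1)\cong\PP^{m-2}$. Any two of these hyperplanes meet exactly along $\Lambda$, so outside $\Lambda$ they are pairwise disjoint, and counting gives $|\V(F)|=p_{m-2}+d(p_{m-1}-p_{m-2})=dq^{m-1}+p_{m-2}$; here $d\le q$ is exactly what guarantees $d$ distinct slopes $a_i$. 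Combined with the upper bound this yields $e_1(d,m)=dq^{m-1}+p_{m-2}$.

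For the upper bound I would induct on $m$, the case $m=1$ being clear since a nonzero binary form of degree $d$ has at most $d=dq^{0}+p_{-1}$ roots in $\PP^1$. For $m\ge2$, if $\V(F)=\emptyset$ there is nothing to prove, so fix any point $P\in\V(F)$ and let $\mu\ge1$ be the multiplicity of $\V(F)$ at $P$. Choosing coordinates with $P=[1:0:\cdots:0]$ and writing $F=\sum_{i\ge0}X_0^{d-i}G_i$, where $G_i$ is a form of degree $i$ in $X_1,\dots,X_m$ and $\mu=\min\{i:G_i\ne0\}$, I would count the points of $\V(F)$ by sorting the $p_{m-1}$ lines through $P$ into those contained in $\V(F)$ and those not. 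A line through $P$ lies in $\V(F)$ precisely when its direction is a common zero of $G_\mu,\dots,G_d$ in $\PP^{m-1}$, while any line \emph{not} contained in $\V(F)$ meets $\V(F)$ in at most $d-\mu$ points besides $P$, because $P$ is a root of order at least $\mu$ of the restricted nonzero binary form of degree $d$.

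Writing $s$ for the number of lines through $P$ lying in $\V(F)$, these two observations give $|\V(F)|\le 1+sq+(p_{m-1}-s)(d-\mu)$. The crucial inductive input is the estimate $s\le|\V(G_\mu)|\le \mu q^{m-2}+p_{m-3}$, obtained by applying the inductive hypothesis on $\PP^{m-1}$ to the nonzero degree-$\mu$ form $G_\mu$ (legitimate since $\mu\le d\le q$). Because $d\le q$ forces the coefficient $q-d+\mu$ of $s$ to be positive, substituting this estimate yields an explicit bound $\Phi(\mu)$ for $|\V(F)|$ depending only on $d,m,q,\mu$.

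The main obstacle, and the step I would treat most carefully, is checking that $\Phi(\mu)$ never exceeds the target regardless of which $P$ (hence which $\mu$) is chosen. A short computation gives the clean identity $\Phi(\mu)-\Phi(1)=(\mu-1)(\mu-d)q^{m-2}$, which is $\le0$ for $1\le\mu\le d$, together with $\Phi(1)=dq^{m-1}+p_{m-2}$; hence $|\V(F)|\le\Phi(\mu)\le\Phi(1)=dq^{m-1}+p_{m-2}$ for every admissible $\mu$. Finally, for $d\ge q+1$ the asserted bound already exceeds $p_m=|\PP^m(\Fq)|$ and so holds trivially, which explains why sharpness is claimed only in the range $d\le q$.
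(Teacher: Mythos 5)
Your proof is correct, and it is worth noting that the paper offers no proof of this statement at all: Theorem \ref{thm:SerreSorr} is simply quoted from Serre \cite{Se} and S{\o}rensen \cite{Soe}, so any complete argument is necessarily ``different from the paper's.'' Your sharpness example (a pencil of $d$ distinct hyperplanes through a common codimension-two linear subspace, which requires exactly $d\le q$) is the standard one. For the upper bound, your route --- project from a point $P\in\V(F)$ of multiplicity $\mu$, split the $p_{m-1}$ lines through $P$ into those contained in $\V(F)$ (at most $|\V(G_\mu)|\le \mu q^{m-2}+p_{m-3}$ of them, by induction on $m$) and the rest (each meeting $\V(F)$ in at most $d-\mu$ further points), and then verify via the identity $\Phi(\mu)-\Phi(1)=(\mu-1)(\mu-d)q^{m-2}\le 0$ that the worst case is $\mu=1$ --- is a valid and self-contained induction; I checked the identity and the positivity of the coefficient $q-d+\mu$ of $s$ that justifies substituting the bound on $s$, and both are right, as is the observation that $d\ge q+1$ makes the bound trivial. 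Serre's original argument is organized differently (it splits according to whether $\V(F)$ contains a hyperplane, peeling off a linear factor in one case and using an incidence count over hyperplanes in the other), and that dichotomy is closer in spirit to the machinery the present paper actually develops (Lemmas \ref{lem:factorL} and \ref{lem:Zanella}); your line-counting argument is more in the style of Homma--Kim type bounds and has the mild advantage of handling all multiplicities uniformly in a single induction on $m$.
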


Next, we recall a variant of B\'{e}zout's Theorem given by Lachaud and Rolland \cite[Cor 2.2]{LR}.
It should be noted that since $S$ as well as $T$ are unique factorization domains, 
a gcd (= greatest common divisor) of any finite collection $F_1, \dots , F_r$ of polynomials in either of these rings 
exists and is unique up to multiplication by a nonzero scalar, and it may be denoted by
$\gcd(F_1, \dots, F_r)$. Note also that in case  $F_1, \dots , F_r$ are homogeneous, then so is their gcd.

\begin{theorem}
\label{thm:AffineLachaud}
Let $f_1,\dots,f_r \in T$ be nonzero polynomials such that $\Ze(f_1, \dots , f_r)$ is an affine algebraic variety of dimension $s$. Then 
$$
|\Ze(f_1, \dots , f_r)| \le \deg(f_1)\cdots \deg(f_r) q^{s}.
$$
In particular we have $$|\Ze(f_1)| \le \deg (f_1) q^{m-1}$$ and
$$
|\Ze(f_1,f_2)| \le \deg (f_1) \deg (f_2) q^{m-2}, \quad \text{provided} \quad \gcd(f_1,f_2)=1.
$$
\end{theorem}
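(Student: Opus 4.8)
The plan is to deduce the theorem from a single general bound on the number of $\Fq$-rational points of an affine variety in terms of its dimension and degree: for any affine variety $V \subseteq \AA^m$ one has $|V(\Fq)| \le (\deg V)\, q^{\dim V}$, where $\deg V$ denotes the sum of the degrees of the irreducible components of $V$. Granting this, the theorem follows at once. Indeed, setting $V := \Ze(f_1, \dots , f_r)$, the affine Bézout inequality gives $\deg V \le \deg(f_1)\cdots \deg(f_r)$, and since $\dim V = s$ by hypothesis we obtain $|\Ze(f_1,\dots,f_r)| = |V(\Fq)| \le \deg(f_1)\cdots\deg(f_r)\, q^{s}$. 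Thus there are two ingredients to establish: the point-counting bound and the Bézout inequality for degrees.

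For the point-counting bound I would argue by induction on $s = \dim V$. Writing $V = \bigcup_j V_j$ for the decomposition into irreducible components, we have $|V(\Fq)| \le \sum_j |V_j(\Fq)|$ and $\deg V = \sum_j \deg V_j$, so it suffices to bound $|V_j(\Fq)|$ for each component; those of dimension $< s$ are already covered by the inductive hypothesis (since $q^{\dim V_j}\le q^{s}$), and we may therefore assume $V$ itself is irreducible of dimension $s$. If $s=0$ the claim is trivial, as $V$ is a single point. If $s \ge 1$, irreducibility guarantees that some coordinate function $x_i$ is non-constant on $V$, whence $V \not\subseteq \{x_i = c\}$ for every $c$, so each hyperplane section $V_c := V \cap \{x_i = c\}$ with $c \in \Fq$ is a proper closed subset of $V$, of dimension at most $s-1$ and of degree at most $\deg V$. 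Partitioning the rational points according to the value of $x_i$, which ranges over $\Fq$, and applying the inductive hypothesis to each $V_c$ yields
$$
|V(\Fq)| = \sum_{c \in \Fq} |V_c(\Fq)| \le \sum_{c \in \Fq} (\deg V)\, q^{s-1} = (\deg V)\, q^{s}.
$$
The two geometric facts used here --- that a hyperplane section of an irreducible variety not containing it drops the dimension by one and does not increase the degree --- are standard, and crucially the hyperplanes $\{x_i = c\}$ are all defined over $\Fq$, so no extension of scalars is required.

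The remaining ingredient, the affine Bézout inequality $\deg \Ze(f_1, \dots , f_r) \le \prod_i \deg f_i$, I would obtain by induction on $r$ from the refined inequality $\deg(W \cap \Ze(f)) \le (\deg W)\, \deg f$ for the intersection of an arbitrary variety $W$ with a hypersurface, which holds even for improper intersections and is available in the literature. The two displayed special cases then follow by specializing the dimension: for a single nonzero $f_1$ the hypersurface $\Ze(f_1)$ has dimension $m-1$, giving $|\Ze(f_1)| \le \deg(f_1)\, q^{m-1}$; and when $\gcd(f_1, f_2) = 1$ the hypersurfaces $\Ze(f_1)$ and $\Ze(f_2)$ share no common irreducible component, forcing $\dim \Ze(f_1, f_2) \le m-2$ and hence $|\Ze(f_1,f_2)| \le \deg(f_1)\deg(f_2)\, q^{m-2}$. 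I expect the main obstacle to be the careful handling of the degree in the non-proper intersection setting: both $\deg(V\cap W) \le \deg V \deg W$ and the invariance of the degree bound under hyperplane sections rely on the convention that the degree of a reducible, possibly non-equidimensional variety is the sum of the degrees of all its irreducible components, and it is precisely this bookkeeping --- rather than the point count itself --- where one must invoke the sharper forms of Bézout's theorem.
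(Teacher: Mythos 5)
Your proposal is correct, but it takes a genuinely different (more self-contained) route than the paper: the paper's entire proof of the main inequality is a citation to Lachaud--Rolland \cite[Cor. 2.2]{LR}, whereas you reconstruct that result from scratch. Your two ingredients --- the point-count bound $|V(\Fq)| \le (\deg V)\,q^{\dim V}$ proved by induction on dimension via the $\Fq$-rational hyperplane sections $\{x_i=c\}$, $c\in\Fq$, and the affine B\'ezout inequality $\deg \Ze(f_1,\dots,f_r)\le \prod_i \deg f_i$ --- are both sound, and together they are essentially the argument underlying \cite{LR} itself. The reduction to geometrically irreducible components is harmless because every $\Fq$-point lies on some component and the degree is additive over components; the facts you isolate (a coordinate function nonconstant on an irreducible $V$ of positive dimension, the sections being defined over $\Fq$ so that the $\Fq$-points partition, and dimension dropping by one under a proper section) are all correct. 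The one place where your proof is not actually more elementary than the citation is the degree bookkeeping: both ``$\deg(V\cap H)\le \deg V$ for a hyperplane section not containing $V$'' and ``$\deg(W\cap \Ze(f))\le (\deg W)\deg f$ for improper intersections'' require the refined B\'ezout theorem with the sum-over-all-components convention, so you have merely relocated the appeal to the literature from the point count to the intersection theory --- you identify this honestly. For the two displayed special cases your argument and the paper's coincide in substance: both reduce to a codimension count, the paper citing \cite[Lem. 2.2]{DG} for coprime $\Rightarrow$ codimension $2$ while you give the direct reason (no common irreducible component over $\overline{\FF}_q$, coprimality being preserved under field extension), and both dismiss the constant-$f_i$ case as trivial.
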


\begin{proof}
The first assertion is \cite[Cor 2.2]{LR}. The next two are immediate consequences because if $f_1\in T$ is 
nonconstant, then the hypersurface $\Ze (f_1)$ has codimension $1$ in $\AA^m$, whereas if $f_1, f_2\in T$ are coprime of positive degrees, then arguing as in the proof of \cite[Lem. 2.2]{DG}, we 
see that the codimension of $\Ze(f_1, f_2)$ is $2$. The case when $\deg(f_i)=0$ for some $i=1,2$, is trivial.
\end{proof}

Let us deduce a refinement of the last result, which will be useful to us later.

\begin{lemma}
\label{lem:AffineLachaudrefined}
Assume that $r \ge 2$.
Let $f_1, \dots, f_r \in T_{\le d}$ be  linearly independent polynomials such that $\gcd(f_1,\dots,f_r)=1$.
If $\deg(f_1) \le d-1$, then
$$
|\Ze(f_1,\dots,f_r) | \le (d-1)d q^{m-2}.$$
If, in addition, 
$\deg (f_2) \le d-1$, then
$$
|\Ze(f_1,\dots,f_r)| \le (d-1)^2 q^{m-2}.$$
\end{lemma}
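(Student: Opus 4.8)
The plan is to reduce everything to the coprime two-polynomial estimate $|\Ze(g_1,g_2)| \le \deg(g_1)\deg(g_2)\,q^{m-2}$ furnished by Theorem~\ref{thm:AffineLachaud}. The mechanism is always the same: decompose a low-degree polynomial into its distinct irreducible factors, and exploit the hypothesis $\gcd(f_1,\dots,f_r)=1$ to pair each such factor with some $f_j$ that it does not divide, making the two coprime. Throughout I may assume every $f_i$ is nonconstant, since a nonzero constant among them makes $\Ze(f_1,\dots,f_r)$ empty.

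For the first bound I would factor the radical of $f_1$ as $p_1\cdots p_s$ with the $p_i$ distinct irreducible, so that $\Ze(f_1)=\bigcup_{i=1}^s \Ze(p_i)$ and $\sum_i \deg(p_i)\le \deg(f_1)\le d-1$. Intersecting with the remaining zero set gives
\[
\Ze(f_1,\dots,f_r)=\bigcup_{i=1}^s\bigl(\Ze(p_i)\cap \Ze(f_2,\dots,f_r)\bigr).
\]
Since $\gcd(f_1,\dots,f_r)=1$ while $p_i\mid f_1$, for each $i$ there is some $j=j(i)$ with $p_i\nmid f_{j}$; as $p_i$ is irreducible this means $\gcd(p_i,f_{j})=1$. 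Hence $\Ze(p_i)\cap\Ze(f_2,\dots,f_r)\subseteq \Ze(p_i,f_{j})$, which Theorem~\ref{thm:AffineLachaud} bounds by $\deg(p_i)\,\deg(f_{j})\,q^{m-2}\le \deg(p_i)\,d\,q^{m-2}$. Summing over $i$ and using $\sum_i\deg(p_i)\le d-1$ yields the claimed bound $(d-1)d\,q^{m-2}$.

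For the second bound I would instead split off the common part of $f_1$ and $f_2$. Write $h=\gcd(f_1,f_2)$, $f_1=hf_1'$, $f_2=hf_2'$ with $\gcd(f_1',f_2')=1$, and set $e=\deg(h)$. Then $\Ze(f_1,f_2)=\Ze(h)\cup\Ze(f_1',f_2')$, so
\[
\Ze(f_1,\dots,f_r)=\bigl(\Ze(h)\cap\Ze(f_3,\dots,f_r)\bigr)\cup\bigl(\Ze(f_1',f_2')\cap\Ze(f_3,\dots,f_r)\bigr).
\]
The second set lies in $\Ze(f_1',f_2')$, which by coprimality and Theorem~\ref{thm:AffineLachaud} has at most $(\deg(f_1)-e)(\deg(f_2)-e)\,q^{m-2}$ points. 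For the first set, the hypothesis $\gcd(f_1,\dots,f_r)=1$ forces $\gcd(h,f_3,\dots,f_r)=1$, so I can rerun the factor-pairing argument of the first bound on $h$ against $f_3,\dots,f_r$, obtaining at most $e\,d\,q^{m-2}$ points. Adding the two contributions, it remains to verify the numerical inequality $(\deg(f_1)-e)(\deg(f_2)-e)+ed\le (d-1)^2$.

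The main obstacle, and the crux of the second part, is controlling $e=\deg(h)$: the factors of $f_1$ that also divide $f_2$ cannot be cut down by $f_1$ or $f_2$ (both vanish on $\Ze(h)$), so they must be handled with the possibly-degree-$d$ polynomials $f_3,\dots,f_r$, producing the unwanted factor $ed$ in place of $e(d-1)$. This is absorbed precisely because linear independence of $f_1,f_2$ forces $e\le d-2$: if $e=\deg(h)=d-1$, then since $h\mid f_i$ and $\deg(f_i)\le d-1$ we get $\deg(f_1)=\deg(f_2)=\deg(h)$, whence $f_1,f_2$ are both scalar multiples of $h$, contradicting independence. With $\deg(f_1),\deg(f_2)\le d-1$ and $0\le e\le d-2$ one has $(\deg(f_1)-e)(\deg(f_2)-e)\le (d-1-e)^2$, and a direct computation gives $(d-1-e)^2+ed-(d-1)^2=e\bigl(e-(d-2)\bigr)\le 0$, which closes the argument. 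The case $r=2$ is the degenerate instance $h=1$, $e=0$, where the first piece is empty.
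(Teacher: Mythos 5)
Your argument is correct and follows essentially the same route as the paper's own proof: pair each irreducible factor of $f_1$ (respectively of $\gcd(f_1,f_2)$) with a coprime $f_j$ and apply the two-polynomial bound from Theorem~\ref{thm:AffineLachaud}, then split off $g=\gcd(f_1,f_2)$ and verify the same quadratic inequality in $e=\deg g$. The only cosmetic differences are that you factor the radical of $f_1$ rather than its full factorization and that you justify $e\le d-2$ somewhat more carefully than the paper does.
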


\begin{proof}
For $r=2$ this follows directly from Theorem \ref{thm:AffineLachaud}. Therefore we assume $r>2$ from now on.
To estimate $|\Ze(f_1,\dots,f_r)|$ we proceed as follows: Let $p$ be an irreducible factor of $f_1$. Since we assume that $\gcd(f_1,\dots,f_r)=1$, there exists $i\ge 2$ such that $\gcd(p,f_i)=1$. Using Theorem \ref{thm:AffineLachaud}, we see that $|\Ze(p,f_2,\dots,f_r)| \le |\Ze(p,f_i)| \le \deg(p) d q^{m-2}.$ On the other hand if $f_1=p_1\cdots p_k$ for some irreducible,  but not necessarily distinct,
$p_1, \dots , p_k\in T$, then $|\Ze(f_1,\dots,f_r)|\le \sum_j |\Ze(p_j,f_2,\dots,f_r)|.$ Combining these two estimates, we find that
$$
|\Ze(f_1,\dots,f_r)| \le \deg(f_1)d q^{m-2} \le (d-1)d q^{m-2}.
$$

Now suppose  $\deg (f_1) \le d-1$ and $\deg (f_2) \le d-1$. Here, we need a more refined analysis. Let $g=\gcd(f_1,f_2)$ and write $b=\deg(g)$ and $f_1=gf_1'$, $f_2=gf_2'$. Since $f_1$ and $f_2$ are linearly independent, $f_1'$ and $f_2'$ are nonconstant polynomials.
Hence $b<d-1$. 
It is clear that
$$|\Ze(f_1,\dots,f_r)|\le |\Ze(f_1',f_2')|+|\Ze(g,f_3,\dots,f_r)|.$$
By Theorem \ref{thm:AffineLachaud}, we find that $|\Ze(f_1',f_2')| \le (d-b-1)^2q^{m-2}$. To estimate $|\Ze(g,f_3,\dots,f_r)|$ we proceed on similar lines as before and obtain that
$$|\Ze(g,f_3,\dots,f_r)| \le b d q^{m-2}.$$
Hence we see that
$$|\Ze(f_1,f_2,f_3,\dots,f_r)| \le (d-b-1)^2q^{m-2}+bdq^{m-2}=\left( (d-1)^2+b(b-d+2) \right) q^{m-2}.$$
Since $0 \le b \le d-2$, the maximal value of $b(b-d+2)$ is attained for $b=0$ (or $b=d-2$). The conclusion of the lemma now follows in this case as well.
\end{proof}

We will also need the following result due to Homma and Kim \cite[Thm.1.2]{HK}:

\begin{theorem}\label{thm:HK}
Let $\X \subset \Pp^m(\overline{\FF}_q)$ be a hypersurface of degree $d$ defined over $\mathbb{F}_q$ without an $\mathbb{F}_q$-linear component, and let $\X(\Fq)$ denote the set of its $\Fq$-rational points. Then
$$
|\X (\Fq)| \le (d-1)q^{m-1}+dq^{m-2}+p_{m-3}.
$$
\end{theorem}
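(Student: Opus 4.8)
The plan is to prove the bound by induction on $m$, slicing $\X$ by $\Fq$-rational hyperplanes and applying the inductive hypothesis to the resulting sections. The base case is $m=2$: here $\X$ is a plane curve of degree $d$ over $\Fq$ with no $\Fq$-linear component, and the asserted inequality reads $|\X(\Fq)|\le (d-1)q+d$. This is weaker than (hence follows from) the Sziklai-type bound $|\X(\Fq)|\le (d-1)q+1$ for such curves; establishing this plane-curve estimate is itself the deepest input, and I would quote it rather than reprove it.

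For the inductive step, assume the bound for $m-1\ge 2$. Write $N=|\X(\Fq)|$ and let $\Hy$ denote the set of the $p_m$ many $\Fq$-rational hyperplanes of $\Pp^m$. Counting incidences $(P,H)$ with $P\in\X(\Fq)$ and $P\in H$, and using that each $\Fq$-point lies on exactly $p_{m-1}$ hyperplanes, gives
\[
p_{m-1}\,N=\sum_{H\in\Hy}\bigl|(\X\cap H)(\Fq)\bigr|.
\]
Since $\X$ has no $\Fq$-linear component, no $H$ is a component of $\X$, so for every $H$ the section $\X\cap H$ is a hypersurface of degree $d$ in $H\cong\Pp^{m-1}$. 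I would then split $\Hy$ into \emph{good} hyperplanes, for which $\X\cap H$ has no $\Fq$-linear component, and \emph{bad} hyperplanes, for which it does.

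For a good $H$ the inductive hypothesis yields $|(\X\cap H)(\Fq)|\le (d-1)q^{m-2}+dq^{m-3}+p_{m-4}$, whereas for a bad $H$ Theorem \ref{thm:SerreSorr} gives only the weaker $|(\X\cap H)(\Fq)|\le dq^{m-2}+p_{m-3}$. Writing $a$ for the number of bad hyperplanes and substituting these two estimates into the identity, one can use the recursion $B_m=qB_{m-1}+1$ for the target quantity $B_m:=(d-1)q^{m-1}+dq^{m-2}+p_{m-3}$ to reduce, in the relevant range $d\le q$, the desired inequality $N\le B_m$ to a clean combinatorial requirement: that $a$ be at most roughly $q+1$.

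The main obstacle is exactly this control of the bad hyperplanes. A hyperplane $H$ is bad precisely when $\X$ contains an $\Fq$-rational linear subspace of codimension two lying in $H$; since each such subspace lies in $q+1$ hyperplanes, a naive count bounds $a$ by $q+1$ times the number of codimension-two linear subspaces contained in $\X$, which is far too lossy and already fails when $\X$ contains two skew such subspaces. To close the induction one cannot use the Serre bound as a black box on bad sections; instead I would peel off the linear components, writing $\X\cap H$ as a union of $k\ge 1$ hyperplanes of $H$ together with a residual hypersurface of degree $d-k$, and bound its points accordingly. The saving from each peeled component, aggregated over the bad hyperplanes sharing a common linear subspace, must then be shown to compensate the deficit. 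Making this bookkeeping precise, while simultaneously controlling the configuration of linear subvarieties contained in $\X$, is the crux of the argument, and is where the real work of Homma and Kim lies.
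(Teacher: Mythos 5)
This theorem is not proved in the paper at all: it is quoted verbatim from Homma and Kim \cite[Thm.\ 1.2]{HK}, so there is no in-paper argument to compare yours against. Judged on its own terms, your proposal is an outline rather than a proof, and the gap is exactly where you locate it. The incidence identity and the good/bad dichotomy are fine, and your arithmetic reduction (via $B_m=qB_{m-1}+1$) to a bound of the shape $a\lesssim q+1$ on the number of bad hyperplanes is correct in spirit; but that bound is simply false in general, as you yourself observe. A concrete witness: for a hyperbolic quadric in $\PP^3$ (where the stated bound $(d-1)q^{m-1}+dq^{m-2}+p_{m-3}=(q+1)^2$ is attained with equality), every one of the $(q+1)^2$ tangent planes is ``bad'', vastly exceeding $q+1$. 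So the Serre bound cannot be used as a black box on bad sections, and the compensating bookkeeping you gesture at --- peeling off linear components of $\X\cap H$ and aggregating the savings over the pencils of hyperplanes through each codimension-two linear subspace contained in $\X$ --- is precisely the content of the theorem. Deferring it to ``where the real work of Homma and Kim lies'' means the proof has not been given.

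Two smaller points. First, your base case is overpowered: for $m=2$ the claimed inequality is $|\X(\Fq)|\le (d-1)q+d$, which follows elementarily (pick $P\in\X(\Fq)$; each of the $q+1$ lines through $P$ meets the curve in at most $d-1$ further $\Fq$-points since no line is a component), so the deep Sziklai-type bound $(d-1)q+1$ need not be invoked. Second, your reduction is carried out ``in the relevant range $d\le q$'', a hypothesis the theorem does not impose; this is harmless for the applications in this paper but should be flagged if you intend the argument to prove the statement as written.
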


The following lemma will play an important role later and it appears, for example, in \cite[Lem. 3.3]{Z}.
See also \cite[Lem. 2.1 and Rem. 2.3]{DG1}. We outline a 
proof for the sake of completeness.

\begin{lemma}\label{lem:Zanella}
Let $\X\subseteq \mathbb{P}^m (\Fq)$ be any 
subset. 
Define
$$a:=\max_{\Hy}|\X \cap \Hy|,$$ where $\Hy$ ranges over all hyperplanes in $\Pp^m$ defined over $\Fq$.
Then
$$
|\X| \le aq +1 \quad \text{and if $\X \ne \PP^m (\Fq)$, then} \quad |\X| \le aq.
$$
\end{lemma}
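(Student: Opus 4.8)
The plan is to exploit the standard fibration of $\PP^m(\Fq)$ by the hyperplanes through a fixed point, or equivalently to count incidences between points of $\X$ and hyperplanes. First I would recall the elementary double-counting fact in projective space: every point $P \in \PP^m(\Fq)$ lies on exactly $p_{m-1} = q^{m-1} + \cdots + q + 1$ hyperplanes defined over $\Fq$, and the total number of hyperplanes in $\PP^m$ is also $p_m$. The cleanest route, however, is to fix a single point $P_0 \in \PP^m(\Fq)$ and consider the pencil of hyperplanes passing through $P_0$. More precisely, I would choose a point $P_0 \notin \X$ if such a point exists (this is exactly the case $\X \ne \PP^m(\Fq)$), and otherwise argue directly.

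The key step is the following partition argument. Fix $P_0 \in \PP^m(\Fq)$ and let $\Hy_1, \dots, \Hy_{N}$ be the hyperplanes defined over $\Fq$ that pass through $P_0$, where $N = p_{m-1}$. Every point $Q \in \PP^m(\Fq)$ with $Q \ne P_0$ lies on exactly one line through $P_0$, and hence lies on a predictable number of these hyperplanes; more usefully, the hyperplanes through $P_0$ cover all points of $\PP^m(\Fq)$, and one can arrange them so that $\PP^m(\Fq) \setminus \{P_0\}$ is partitioned according to the $p_{m-1}$ distinct lines through $P_0$. I would instead use the cleaner fibration: the $q+1$ hyperplanes of any pencil (the family of hyperplanes containing a fixed codimension-$2$ linear subspace) cover $\PP^m(\Fq)$ with the base locus counted once in each. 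Concretely, pick a codimension-$2$ subspace $\Lambda$ and let $\Hy_0, \dots, \Hy_q$ be the $q+1$ hyperplanes containing $\Lambda$; then each point of $\PP^m(\Fq) \setminus \Lambda$ lies on exactly one $\Hy_i$, while each point of $\Lambda$ lies on all of them. This gives
$$
|\X| = |\X \cap \Lambda| + \sum_{i=0}^{q} |(\X \cap \Hy_i) \setminus \Lambda| = \sum_{i=0}^{q} |\X \cap \Hy_i| - q\,|\X \cap \Lambda|.
$$
Each summand is at most $a$, so $|\X| \le (q+1)a - q\,|\X \cap \Lambda|$. To sharpen this to $aq+1$ I would choose the pencil so that $\Lambda$ meets $\X$, giving $|\X \cap \Lambda| \ge 1$ and hence $|\X| \le (q+1)a - q = aq + (a - q) \le aq + 1$ once one checks $a \le q+1$; alternatively, and more robustly, I would choose $\Lambda \subseteq \Hy$ for a hyperplane $\Hy$ achieving $|\X \cap \Hy| = a$, so that $|\X \cap \Lambda| \le a$, and then select among the $q+1$ hyperplanes of the pencil carefully.

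The main obstacle is getting the additive constant exactly right and handling the two cases uniformly. The cleanest argument I would ultimately commit to is: choose a point $P_0 \in \X$ (assuming $\X \ne \emptyset$; the empty case is trivial) and count the hyperplanes through $P_0$. Since each of the $p_{m-1}$ hyperplanes through $P_0$ contains at most $a$ points of $\X$, and every point of $\X$ other than $P_0$ is joined to $P_0$ by a line lying in $q^{m-1}$ of these hyperplanes, a weighted count yields $|\X| - 1 \le aq$, giving $|\X| \le aq + 1$ directly. For the sharper bound when $\X \ne \PP^m(\Fq)$, I would pick a point $P_0 \in \PP^m(\Fq) \setminus \X$; then the $q+1$ hyperplanes of a suitable pencil through $P_0$ cover all of $\X$ (none of which meets the base locus in a way that forces the extra $+1$), and since $P_0 \notin \X$ each such hyperplane contributes at most $a$ while the overcounting term drops out, yielding $|\X| \le aq$. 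The delicate point is verifying that the base locus contributes no spurious points of $\X$ in this second case, which follows from choosing $P_0 \notin \X$ and arranging the pencil so its base locus also avoids $\X$, possible by a dimension count since $\X$ is a proper subset.
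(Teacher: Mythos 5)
Your proposal breaks down at the second assertion, $|\X|\le aq$ for $\X\ne\PP^m(\Fq)$, and the failure is structural, not cosmetic. First, a pencil of hyperplanes whose base locus $\Lambda$ avoids $\X$ need not exist: $\X$ is an arbitrary subset of $\PP^m(\Fq)$, so it could be, say, $\PP^m(\Fq)$ minus a single point, in which case every codimension-$2$ linear subspace (having $p_{m-2}\ge 2$ points when $m\ge 2$) meets $\X$; a ``dimension count'' only works for varieties of bounded dimension, not for arbitrary point sets. Second, even granting such a pencil $\Hy_0,\dots,\Hy_q$ with $\X\cap\Lambda=\emptyset$, your own identity gives $|\X|=\sum_{i=0}^{q}|\X\cap\Hy_i|\le (q+1)a=aq+a$, not $aq$: the ``overcounting term'' you expect to drop out is $q\,|\X\cap\Lambda|$, which is $0$ in this situation, so nothing is saved. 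Your first bound is closer to being right: the committed argument (double-count incidences between $\X$ and the hyperplanes through a fixed $P_0\in\X$) does work, but the line $P_0Q$ lies on $p_{m-2}$ of the hyperplanes through $P_0$, not $q^{m-1}$ of them (that is the number \emph{avoiding} $Q$), and the asserted conclusion $|\X|-1\le aq$ is not a formality: it comes out of $p_{m-1}+(|\X|-1)p_{m-2}\le a\,p_{m-1}$ only after using $a\le p_{m-1}$, a step you never take.

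The paper's proof is a single global incidence count that delivers both bounds at once: counting $\{(P,H): P\in\X,\ P\in H\}$ in two ways gives $|\X|\,p_{m-1}\le a\,p_m$, hence
$$
|\X|\le aq+\frac{a}{p_{m-1}}\le aq+1 .
$$
The refinement to $aq$ is then an integrality argument, which is the idea missing from your proposal: if $a<p_{m-1}$ the error term is strictly less than $1$, so $|\X|\le aq$; if $a=p_{m-1}$ and $|\X|=aq+1=p_m$, then $\X$ is forced to be all of $\PP^m(\Fq)$. So for a proper subset the bound $|\X|\le aq$ holds in either case. You should replace your pencil argument for the second bound by this integrality/extremal-case analysis (or find some genuinely different mechanism that produces the saving of the final $+1$).
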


\begin{proof}
Let $\Phat^m (\Fq)$ denotes the set of hyperplanes in $\Pp^m$ defined over $\fq$.
Counting the incidence set
$
\{ (P, H) \in \X \times \Phat^m (\Fq) : P \in H \}
$
in two ways using the first and the second projections, we obtain $|\X| p_{m-1} \le a p_m$.
This gives $|\X| \le aq + (a/p_{m-1})\le aq+1$, since $a \le p_{m-1}$.
Further, if $a< p_{m-1}$, then  $|\X| \le aq$,
since $|\X|$ is an integer, whereas if $a=p_{m-1}$ and $|\X| = a q+ 1 = p_m$, then we must have $\X=\PP^m (\Fq)$. 
This completes the proof.
\end{proof}
%

We have already alluded to an important result of Heijnen and Pellikaan \cite{HP}. We end this subsection by recording its statement
essentially as in \cite[Thm. 5.10]{HP}, and then outline how the version stated in the Introduction can be deduced.

\begin{theorem} 
\label{HP}
Assume that $1 \le d< q$ and $r\le \binom{m+d}{d}$.
Then
\begin{equation}
\label{HrdmO}
e_r^{\AA}(d,m) = 
q^m - \Big({1+\sum_{j= 1}^{m}} \alpha_{j} q ^{m-j} \Big),
\end{equation}
where $(\alpha_1, \dots , \alpha_m)$ is the $r^{\rm th}$ tuple in ascending lexicographic order among
$m$-tuples $(\lambda_1, \dots , \lambda_m)$ with coordinates from $\{0,1, \dots , q-1\}$ satisfying $\lambda_1+ \dots + \lambda_m \ge m(q-1)-d$,
\end{theorem}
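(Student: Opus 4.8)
The plan is to treat the displayed equality \eqref{HrdmO} itself as the content of \cite[Thm.~5.10]{HP}, so that the substantive task is to reconcile this formulation with the version \eqref{Hrdm} recorded in the Introduction; the two are linked by a single combinatorial device. First I would introduce the complementation map sending a tuple $(\lambda_1, \dots, \lambda_m)$ with coordinates in $\{0, 1, \dots, q-1\}$ to $(\gamma_1, \dots, \gamma_m)$ defined by $\gamma_j := (q-1) - \lambda_j$. This is an involution on the set of all such box-constrained tuples, and under it the constraint $\lambda_1 + \cdots + \lambda_m \ge m(q-1) - d$ becomes exactly $\gamma_1 + \cdots + \gamma_m \le d$. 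Here I would point out that, because $d < q$, any nonnegative tuple with $\gamma_1 + \cdots + \gamma_m \le d$ automatically has every coordinate at most $d \le q-1$; hence the box constraint is vacuous on the $\gamma$-side, and the index set of \eqref{Hrdm} coincides, via complementation, with that of \eqref{HrdmO}. In particular both sets are counted by $\binom{m+d}{d}$, so that ``the $r$th tuple'' is meaningful in each and the range $1 \le r \le \binom{m+d}{d}$ is the correct one.

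The second step is to match the two orderings. Since $x \mapsto (q-1) - x$ reverses the order on $\{0, 1, \dots, q-1\}$ coordinatewise while preserving the position of the first coordinate at which two distinct tuples differ, complementation reverses the lexicographic order. Consequently the $r$th tuple $(\alpha_1, \dots, \alpha_m)$ in \emph{ascending} lexicographic order among the admissible $\lambda$-tuples corresponds exactly to the $r$th tuple $(\beta_1, \dots, \beta_m)$ in \emph{descending} lexicographic order among the admissible $\gamma$-tuples, which yields the coordinatewise relation $\alpha_j = (q-1) - \beta_j$.

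Finally I would substitute this relation into the right-hand side of \eqref{HrdmO} and simplify. Expanding $q^m - 1 - \sum_{j=1}^m \bigl((q-1) - \beta_j\bigr) q^{m-j}$ and invoking the geometric-sum identity $(q-1)\sum_{j=1}^m q^{m-j} = q^m - 1$, the two copies of $q^m - 1$ cancel and one is left with $\sum_{j=1}^m \beta_j q^{m-j} = H_r(d,m)$, which is precisely \eqref{Hrdm}. The only points demanding attention — and hence the main, though modest, obstacle — are the careful bookkeeping of the order reversal under complementation and the remark that the box constraint on the $\gamma$-side is automatic once $d < q$; with these in hand, the passage between the two formulas is a one-line telescoping computation.
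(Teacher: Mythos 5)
Your proposal is correct and follows essentially the same route as the paper: the equality \eqref{HrdmO} is taken directly from \cite[Thm.~5.10]{HP}, and the paper then performs exactly your complementation $\beta_j = q-1-\alpha_j$ (with the same order-reversal and the same observation that $d<q$ makes the box constraint automatic) to recover \eqref{Hrdm} via the telescoping identity $(q-1)\sum_{j=1}^m q^{m-j} = q^m-1$.
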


%
To see the equivalence with \eqref{Hrdm}, let us rewrite the expression on the right in \eqref{HrdmO} as
$$
\sum_{j=1}^m (q^{m-j+1} - q^{m - j} - \alpha_{j}q^{m-j} )
=\sum_{j=1}^m \beta_j q^{m-j}, \quad \text{where }
\beta_j:=q-1-\alpha_j \text{ for }j=1, \dots, m.
$$
Note that $(\beta_1,\dots,\beta_m)$ is precisely the $r$th tuple in descending lexicographic order among all $m$-tuples $\gamma = (\gamma_1, \dots , \gamma_m)$ with coordinates in
$\{0,1,\dots,q-1\}$ satisfying 
$\gamma_1 + \cdots +  \gamma_m \le d$.
Moreover, if $d< q$, then the last condition implies 
$\gamma_j \le q-1$ for $j=1, \dots , m$. So if we take 
\begin{equation}\label{eq:Sigmadm}
\Sigma (d,m) := \left\{\gamma = (\gamma_1, \dots , \gamma_m)\in \N^m : \gamma_1 + \cdots +  \gamma_m \le d \right\}
\end{equation}
and $\beta$ the $r$th element of $\Sigma (d,m)$ in descending lexicographic order,
then 
\eqref{HrdmO} implies \eqref{Hrdm}.

\subsection{Combinatorics of $H_r(d,m)$}
As mentioned in the Introduction, we are mainly interested in this paper in conjectural equality \eqref{DGdm} and it is therefore important to understand $H_r(d,m)$ a little better. Let us begin by recalling the definition:
$$
H_r(d,m): = \sum_{j=1}^m \beta_j q^{m-j}, \quad \text{for } m\ge 1, \ 1\le d < q \text{ and } 1\le r \le \textstyle{\binom{m+d}{d}},
$$
where  $\beta$ the $r$th element of $\Sigma (d,m)$ in descending lexicographic order and where $\Sigma (d,m)$ is as in \eqref{eq:Sigmadm}. We shall now proceed to establish several elementary properties of $H_r(d,m)$. These might seem disparate at first, but they will turn out to be useful in later sections.

Observe that if
$\lambda_1, \dots , \lambda_m$ are integers, not all zero, 
with $|\lambda_j|\le q-1$ for $j=1, \dots , m$, then
the sum
$\sum_{j=1}^m \lambda_jq^{m-j}$ 
has the same sign as that of the first nonzero integer among  $\lambda_1, \dots , \lambda_m$.
Now if $d<q$ and if $\gamma, \gamma'\in \Sigma (d,m)$, then 
using the above observation for $\lambda= \gamma - \gamma'$, we see that
$$
\gamma <_{\rm lex} \gamma' \Longleftrightarrow \sum_{j=1}^m\gamma_jq^{m-j} \, < \, \sum_{j=1}^m \gamma'_jq^{m-j}.
$$
This 
implies the strict monotonicity of $H_r(d,m)$ in the parameter $r$:
\begin{equation}
\label{Monotone}
H_1(d,m) > H_2(d,m) >  \dots > H_{ \binom{m+d}{d}}(d,m).
\end{equation}
We will now try to determine $H_r(d.m)$ explicitly for ``small'' values of $r$. 
For $1\le i \le m+1$, let 
$\eim$ be the $m$-tuple with $1$ in $i$th place and $0$ elsewhere; when $i=m+1$, this is 
the zero-tuple. 
Clearly, the first $m+1$ elements of $\Sigma(d,m)$ are 
$(d-1)\mathbf{e}_1^m + \mathbf{e}_{r}^m$  for $r=1, \dots , m+1$. Consequently,
\begin{equation}
\label{Hvsmallr}
H_r(d,m) = (d-1) q^{m-1} + \lfloor q^{m-r} \rfloor \quad \text{for } 1\le r\le m+1 \text{ and } 1\le d < q.
\end{equation}
In particular, if $d=1$, then we have the simple expression $\lfloor q^{m-r} \rfloor$ for $H_r(d,m)$ for all permissible values of $r$. 
Now suppose $2\le d < q$. Then the  first $\binom{m+2}{2}$ elements can be described in
blocks of $(m+1), m , (m-1), \dots , 2, 1$ as follows
\begin{eqnarray*}
&& (d-2) \mathbf{e}_1^m +  \mathbf{e}_1^m +  \mathbf{e}_j^m \quad \text{for} \quad j=1, \dots , m+1, \\
&& (d-2) \mathbf{e}_1^m +  \mathbf{e}_2^m +  \mathbf{e}_j^m \quad \text{for} \quad j=2, \dots , m+1, \\
&& (d-2) \mathbf{e}_1^m +  \mathbf{e}_3^m +  \mathbf{e}_j^m \quad \text{for} \quad j=3, \dots , m+1, \\
&& \quad \vdots  \\
&& (d-2) \mathbf{e}_1^m +  \mathbf{e}_m^m +  \mathbf{e}_j^m \quad \text{for} \quad j=m , m+1, \\
&& (d-2) \mathbf{e}_1^m +    \mathbf{e}_{m+1}^m +    \mathbf{e}_{m+1}^m = (d-2) \mathbf{e}_1^m .
\end{eqnarray*}
Put another way, for $r\le \binom{m+2}{2}$, the $r$th element of $\Sigma (d,m)$
is of the form
\begin{equation}
\label{rthtuple}
(d-2) \mathbf{e}_1^m +  \mathbf{e}_i^m +  \mathbf{e}_j^m \quad \text{for unique $i,j \in \Z$ with } 1\le i \le j \le m+1.
\end{equation}
An easy calculation shows that these unique $i,j$ are related to $r\le \binom{m+2}{2}$ by: 
\begin{equation}
\label{rij}
r = (i-1)m - \textstyle{\binom{i-1}{2} }+ j \quad \text{and} \quad 1\le i \le j \le m+1.
\end{equation}
The conditions \eqref{rij} determine $i,j$ uniquely from a  given $r\le \binom{m+2}{2}$.
  From 
  \eqref{rthtuple}, we  see that 
\begin{equation}
\label{Hsmallr}
H_r(d,m) = (d-2) q^{m-1} + \lfloor q^{m-i} \rfloor + \lfloor q^{m-j} \rfloor \quad \text{for } r\le \textstyle{\binom{m+2}{2}} \text{ with $i,j$ as in \eqref{rij}}.
\end{equation}
Notice that in the above setting $i=1$ if and only if $r\le m+1$ and in this case \eqref{Hsmallr}
simplifies to \eqref{Hvsmallr}, at least when $d\ge 2$.
As an additional illustration of \eqref{Hsmallr}, we may also note that
\begin{equation}
\label{Hmplustwo}
H_{m+2}(d,m) = (d-2) q^{m-1} + 2 \lfloor q^{m-2} \rfloor  \quad \text{for } 
2\le d < q.
\end{equation}
Having observed that $H_r(d,m)$ is strictly monotonic in the parameter $r$, we will examine in the next two results the behavior of $H_r(d,m)$ as a function of the parameter $d$ or the parameter $m$.

\begin{proposition}
\label{Hrvaryingd}
Assume that $1< d < q$ and let $c$ be an integer with $0 < c < d-1$. Then
 \begin{equation}
\label{Hrminust}
H_r(d,m) = c q^{m-1} + H_{r}(d-c, m) \quad \text{for } 1< r \le  \textstyle{\binom{m+2}{2}}.
\end{equation}
In particular, $H_r(d-1,m) < H_r(d,m)$ whenever $2< d < q$ and $1< r \le  \textstyle{\binom{m+2}{2}}$.
\end{proposition}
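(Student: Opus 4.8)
The plan is to exploit the explicit description of the $r$th element of $\Sigma(d,m)$ in descending lexicographic order for $r\le\binom{m+2}{2}$, recorded in \eqref{rthtuple}, together with the crucial observation that the indices $i,j$ occurring there are determined by \eqref{rij} from $r$ and $m$ alone, with no dependence on $d$.

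First I would check that the hypotheses make the relevant formulas applicable to both arguments. Since $d>1$ is an integer we have $d\ge 2$, and since $0<c<d-1$ forces $1\le c\le d-2$, we obtain $2\le d-c\le d-1<q$. Hence both $d$ and $d-c$ lie in the range $[2,q)$ for which the block description leading to \eqref{rthtuple} and \eqref{Hsmallr} is valid.

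Next, fix $r$ with $1<r\le\binom{m+2}{2}$ and let $i,j$ be the unique integers with $1\le i\le j\le m+1$ determined by \eqref{rij}; these depend only on $r$ and $m$. By \eqref{rthtuple}, the $r$th element of $\Sigma(d,m)$ is $\beta=(d-2)\mathbf{e}_1^m+\mathbf{e}_i^m+\mathbf{e}_j^m$, while the $r$th element of $\Sigma(d-c,m)$ is $\beta'=(d-c-2)\mathbf{e}_1^m+\mathbf{e}_i^m+\mathbf{e}_j^m$, with the very same $i,j$. Therefore $\beta-\beta'=c\,\mathbf{e}_1^m$, i.e.\ $\beta$ and $\beta'$ agree in every coordinate except the first, where they differ by $c$. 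Summing against the weights $q^{m-1},\dots,q^0$ from the definition of $H_r$ (equivalently, subtracting the two instances of \eqref{Hsmallr}, in which the terms $\lfloor q^{m-i}\rfloor+\lfloor q^{m-j}\rfloor$ cancel) yields
$$
H_r(d,m)-H_r(d-c,m)=c\,q^{m-1},
$$
which is precisely \eqref{Hrminust}. For the ``in particular'' assertion I would then specialize to $c=1$: when $2<d<q$ the constraint $0<1<d-1$ is satisfied, so the displayed identity gives $H_r(d,m)=q^{m-1}+H_r(d-1,m)>H_r(d-1,m)$ because $q^{m-1}>0$.

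The only point demanding care — and the nearest thing to an obstacle — is confirming that the pair $(i,j)$ attached to $r$ via \eqref{rij} is genuinely independent of $d$, so that identical indices appear in the tuples for $d$ and for $d-c$. Once this is noted, the entire $d$-dependence of $H_r(\,\cdot\,,m)$ for $r\le\binom{m+2}{2}$ is concentrated in the single coefficient $(d-2)$ of $q^{m-1}$, and the result follows with no heavy computation.
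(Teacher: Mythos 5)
Your proposal is correct and follows essentially the same route as the paper: both arguments fix $r$, observe that the indices $i,j$ from \eqref{rij} depend only on $r$ and $m$, verify that $1<d-c<q$ so that \eqref{Hsmallr} applies to both $d$ and $d-c$, and subtract to get $H_r(d,m)-H_r(d-c,m)=cq^{m-1}$. The paper's proof is just a more compressed version of the same computation.
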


\begin{proof}
Fix $r$ with $1< r \le  \textstyle{\binom{m+2}{2}}$, and let $i,j$ be as in \eqref{rij}. Then $j\ge 2$. Also $1< d-c < q$.  Thus by \eqref{Hsmallr}, we see that $H_r(d,m) - H_r(d-c,m) = cq^{m-1}$. This implies the desired result.
\end{proof}

\begin{proposition}
\label{pro:Hrdm}
Assume that $1\le d < q$ and $m>1$. 
Then
\begin{enumerate}
\item[(i)] $qH_{r}(d,m-1) \le H_r(d,m)$ whenever $1\le r \le  \binom{m+d-1}{d}$.
\item[(ii)] $qH_{r-1}(d,m-1) \le H_r(d,m)$ whenever $m+1 \le r \le  \binom{m+d-1}{d}$.
\end{enumerate}
\end{proposition}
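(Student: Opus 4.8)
The plan is to characterize $H_r(d,m)$ as an order statistic and then exhibit enough high-value tuples in $\Sigma(d,m)$. For a tuple $\gamma$ of length $n$ write $v(\gamma):=\sum_{j=1}^{n}\gamma_j q^{n-j}$. Since $d<q$, the equivalence between descending lexicographic order and descending order of the sums $v(\cdot)$ established just before \eqref{Monotone} shows that $H_r(d,n)$ is exactly the $r$th largest value $v(\gamma)$ as $\gamma$ ranges over $\Sigma(d,n)$, and the monotonicity \eqref{Monotone} records that these values strictly decrease in $r$. The key observation is that the map $\phi\colon \Sigma(d,m-1)\to\Sigma(d,m)$, $\delta\mapsto(\delta_1,\dots,\delta_{m-1},0)$, is an injection onto the tuples of $\Sigma(d,m)$ with vanishing last coordinate, and that $v(\phi(\delta))=q\,v(\delta)$ for every $\delta$.

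For part (i), let $\delta^{(1)},\dots,\delta^{(r)}$ be the first $r$ elements of $\Sigma(d,m-1)$ in descending lexicographic order (these exist because $r\le\binom{m+d-1}{d}=|\Sigma(d,m-1)|$). Their images $\phi(\delta^{(1)}),\dots,\phi(\delta^{(r)})$ are $r$ distinct elements of $\Sigma(d,m)$ with values $qH_1(d,m-1)>\dots>qH_r(d,m-1)$, each therefore at least $qH_r(d,m-1)$. Hence $\Sigma(d,m)$ contains at least $r$ tuples of value $\ge qH_r(d,m-1)$, and since $H_r(d,m)$ is the $r$th largest value on $\Sigma(d,m)$ we conclude $H_r(d,m)\ge qH_r(d,m-1)$.

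For part (ii) the same construction supplies only $r-1$ tuples $\phi(\delta^{(1)}),\dots,\phi(\delta^{(r-1)})$ of value $\ge qH_{r-1}(d,m-1)$, so I would produce one further tuple of $\Sigma(d,m)$, distinct from these and of value $\ge qH_{r-1}(d,m-1)$; this is exactly where the hypothesis $r\ge m+1$ enters. I would take $\eta:=(d-1)\mathbf{e}_1^m+\mathbf{e}_m^m=(d-1,0,\dots,0,1)\in\Sigma(d,m)$, which has nonzero last coordinate and hence lies outside the image of $\phi$, so it is distinct from every $\phi(\delta^{(k)})$, and which has $v(\eta)=(d-1)q^{m-1}+1$. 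Because $r-1\ge m$, the tuple $\delta^{(r-1)}$ is lexicographically at most the $m$th element of $\Sigma(d,m-1)$, which by the explicit description of its first $m$ elements (the case $m-1$ of \eqref{Hvsmallr}) equals $(d-1,0,\dots,0)$ of value $(d-1)q^{m-2}$. Thus $qH_{r-1}(d,m-1)=q\,v(\delta^{(r-1)})\le (d-1)q^{m-1}<v(\eta)$. Adjoining $\eta$ then gives $r$ distinct tuples of value $\ge qH_{r-1}(d,m-1)$, so $H_r(d,m)\ge qH_{r-1}(d,m-1)$.

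The genuinely delicate step is the last one: in (ii) one must simultaneously verify that the extra tuple $\eta$ is not already among the images of $\phi$ and that $v(\eta)$ dominates $qH_{r-1}(d,m-1)$. I expect the main obstacle to be pinning down the right $\eta$ and confirming this value inequality, and it is precisely the assumption $r\ge m+1$ that makes it work, since it forces $\delta^{(r-1)}$ to have first coordinate at most $d-1$ and hence value at most $(d-1)q^{m-2}$, leaving exactly enough room for the single unit $q^0$ contributed by the last coordinate of $\eta$.
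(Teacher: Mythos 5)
Your proof is correct and follows essentially the same route as the paper: both arguments rest on the order-preserving embedding $\phi(\gamma_1,\dots,\gamma_{m-1})=(\gamma_1,\dots,\gamma_{m-1},0)$, the identity $v(\phi(\delta))=q\,v(\delta)$, and, for part (ii), the observation that the $m$th element $(d-1,0,\dots,0,1)$ of $\Sigma(d,m)$ is missed by $\phi$ yet still dominates $qH_{r-1}(d,m-1)$ once $r-1\ge m$. The paper phrases this by tracking the position $s\ge r$ of $\phi(\beta)$ and invoking the monotonicity \eqref{Monotone}, while you count tuples whose value exceeds a threshold; these are the same argument in different words.
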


\begin{proof}
Consider 
$\phi: \Sigma(d,m-1)\to \Sigma (d,m)$ defined by $\phi(\gamma_1, \dots , \gamma_{m-1}) = (\gamma_1, \dots , \gamma_{m-1},0)$. It is clear that $\phi$ preserves lexicographic order and that it maps the first $m-1$ elements of $\Sigma(d,m-1)$ to the first $m-1$ elements of $\Sigma(d,m)$. Thus if $\beta$ is the $r$th element of $\Sigma(d,m-1)$, then $\phi(\beta)$ is the $s$th element of $\Sigma(d,m)$ for some
$s \ge r$. Hence  in view of \eqref{Monotone}, we find, 
for $r \le  \binom{m+d-1}{d}$,
\begin{equation}
\label{qHr}
qH_{r}(d,m-1) = q \sum_{j=1}^{m-1} \beta_j q^{m-1-j} = \sum_{j=1}^m \phi(\beta)_j q^{m-j} = H_s(d,m) \le H_r(d,m). 
\end{equation}
This proves (i). Next, observe that the image of $\phi$ misses the $m$th element of $\Sigma(d,m)$, namely, $(d-1,0, \dots , 0, 1)$. It follows that if $r-1\ge m$ and if $\gamma$ is the $(r-1)$th element of $\Sigma(d,m-1)$, then $\phi(\gamma)$ is the $s$th element of $\Sigma(d,m)$ for some $s \ge r = (r-1)+1$. Thus as in \eqref{qHr}, we see that $qH_{r-1}(d,m-1) \le H_r(d,m)$ whenever $m+1 \le r \le  \binom{m+d-1}{d}$. This proves (ii).
\end{proof}


\subsection{Projective Varieties containing a Hyperplane and Zanella's Theorem for Quadrics}
The following result about projective varieties containing a hyperplane  is a slightly more general version of \cite[Lem. 2.5]{DG}. We include a quick proof for the sake of completeness.
\begin{lemma}\label{lem:factorL}
Assume that $d\le q$.
Let $F_1,\dots,F_r$ be linearly independent homogeneous polynomials in $S_d$. 
Suppose that $L \in S_1$ divides each of $F_1,\dots,F_r$. Then
$$
|\V(F_1,\dots,F_r)| \le H_r(d-1,m)+p_{m-1}.
$$
\end{lemma}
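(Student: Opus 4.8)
The plan is to reduce the projective counting problem to an affine one, where the Heijnen--Pellikaan machinery (encoded in $H_r$) applies. The key structural observation is that since the linear form $L \in S_1$ divides each $F_1, \dots, F_r$, every point of $\V(L)$ is automatically a common zero of all the $F_i$. Hence $\V(L) \subseteq \V(F_1, \dots, F_r)$, and $\V(L)$ is a hyperplane containing $p_{m-1}$ rational points. This accounts for the additive $p_{m-1}$ term in the bound, and so the real work is to estimate the number of common zeros lying \emph{outside} the hyperplane $\V(L)$.

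\textbf{Reduction to affine space.} After a linear change of coordinates (which does not affect point counts or linear independence) I would assume $L = X_0$. Writing $F_i = X_0 G_i$ with $G_i \in S_{d-1}$, the common zeros of the $F_i$ outside $\V(X_0)$ are exactly the common zeros of $G_1, \dots, G_r$ in the affine chart $X_0 \neq 0$. Dehomogenizing by setting $X_0 = 1$ identifies this chart with $\AA^m$ and sends each $G_i$ to a polynomial $g_i \in T_{\le d-1}$. The plan is then to bound the number of such affine common zeros by $e^{\AA}_r(d-1,m) = H_r(d-1,m)$, invoking the Heijnen--Pellikaan formula \eqref{Hrdm}; this requires the hypothesis $d \le q$, which guarantees $d - 1 < q$ so that the formula is applicable. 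Combining the two contributions gives
$$
|\V(F_1,\dots,F_r)| \le |\Ze(g_1, \dots, g_r)| + |\V(X_0)| \le H_r(d-1,m) + p_{m-1},
$$
as desired.

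\textbf{The main obstacle.} The delicate point is that applying \eqref{Hrdm} requires $g_1, \dots, g_r$ to be \emph{linearly independent} in $T_{\le d-1}$, whereas we are only given that $F_1, \dots, F_r$ are linearly independent in $S_d$. I would need to verify that dehomogenization $G_i \mapsto g_i$ preserves linear independence. Since the map $X_0 G \mapsto (X_0 G)|_{X_0=1} = G|_{X_0=1}$ is, up to the bijective correspondence between $S_{d-1}$ and $T_{\le d-1}$ given by dehomogenization, injective and $\Fq$-linear (the homogenization of $g_i$ recovers $G_i$ exactly because $G_i$ is homogeneous of the fixed degree $d-1$), any linear dependence among the $g_i$ would pull back to one among the $G_i$, hence among the $F_i = X_0 G_i$, contradicting our hypothesis. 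This is the one step that needs care but is ultimately routine; the rest of the argument is an essentially immediate assembly of the set inclusion $\V(X_0) \subseteq \V(F_1,\dots,F_r)$ with the affine bound.
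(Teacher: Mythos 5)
Your proof is correct and follows essentially the same route as the paper: normalize $L=X_0$, dehomogenize to get polynomials of degree $\le d-1$, apply the Heijnen--Pellikaan bound \eqref{Hrdm} to the affine part, and add $p_{m-1}$ for the hyperplane $\V(X_0)\subseteq\V(F_1,\dots,F_r)$. Your explicit check that dehomogenization preserves linear independence (which is genuinely needed to invoke \eqref{Hrdm} with index $r$) is a point the paper leaves implicit, but it does not change the argument.
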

\begin{proof}
The conditions on $L$ show that $L$ is nonzero and thus
without loss of generality, we may assume that $L=X_0$. For $1 \le i \le m$, let  $f_i(X_1,\dots,X_m):= F_i(1,X_1,\dots,X_m)$; note that $\deg(f_i) \le d-1$, since $X_0 \mid F_i$. Hence \eqref{Hrdm} implies that $|\Ze(f_1,\dots,f_r)| \le H_r(d-1,m)$, and so 
$$
|\V(F_1,\dots,F_r)| = |\Ze(f_1,\dots,f_r)|+|\V(X_0)| \le H_r(d-1,m)+p_{m-1},
$$
as desired.
\end{proof}

Note that for the hypothesis of Lemma \ref{lem:factorL} to hold, it is necessary that $r \le \binom{m+d-1}{d-1}$, because otherwise the polynomials $F_1,\dots,F_r$ cannot be linearly independent. Indeed, by assumption, the polynomials $F_1,\dots,F_r$ are in the vector space $L\cdot S_{d-1}$, which has dimension $\binom{m+d-1}{d-1}.$

The last preliminary result we need is the following theorem of Zanella \cite[Thm. 3.4]{Z} about maximum possible number of $\Fq$-rational points on intersections of $r$ linearly independent quadrics in $\PP^m$.

\begin{theorem}\label{thm:Z}
Assume that $r \le \binom{m+2}{2}$.
Let $k$  be the unique integer such that $-1 \le k < m$ and $\binom{m+2}{2}-\binom{k+3}{2}<r \le \binom{m+2}{2}-\binom{k+2}{2}$. Then
$$
e_r(2,m)=\lfloor q^{\binom{m+2}{2}-\binom{k+2}{2}-r-1}\rfloor+p_k .
$$
In particular, if $r \le m+1$, then $k=m-1$ and thus $e_r(2,m)=\lfloor q^{m-r}\rfloor+p_{m-1}$.
\end{theorem}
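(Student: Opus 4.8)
The plan is to prove the formula by establishing a matching lower bound (an explicit extremal configuration) and upper bound (by induction on $m$), the inductive step being organized around the number of quadrics in the given system that share a common linear factor. Throughout, write $M:=\binom{m+2}{2}$ and, for the given $r$ and the associated $k$, set $e:=M-\binom{k+2}{2}-r$; the defining inequalities for $k$ give $0\le e\le k+1$, and the floor exponent in the statement is exactly $e-1$, so the claimed value is $p_k+\lfloor q^{e-1}\rfloor$. It is worth recalling that, via the $2$-uple embedding, $e_r(2,m)$ equals the maximum number of $\Fq$-points of the Veronese variety $\Vmd$ (with $d=2$) lying on a codimension-$r$ linear subvariety of $\PP^{M-1}$; this viewpoint motivates the construction below and the block structure indexed by $k$.

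For the lower bound I would take $\X$ to be a union $\Lambda\cup\Pi$ of two linear subvarieties of $\PP^m$, with $\Lambda\cong\PP^k$, $\Pi\cong\PP^{e-1}$ and $\Lambda\cap\Pi\cong\PP^{e-2}$ (possible since $e\le k+1$). Such a union has $p_k+p_{e-1}-p_{e-2}=p_k+\lfloor q^{e-1}\rfloor$ points and is cut out by the span of all products $LL'$ of a linear form $L$ vanishing on $\Lambda$ with a linear form $L'$ vanishing on $\Pi$. Counting these quadrics (equivalently, the degree-two part of the intersection of the two vanishing ideals) in general position gives
$$r=\Big(M-\binom{k+2}{2}\Big)+\Big(M-\binom{e+1}{2}\Big)-\Big(M-\binom{e}{2}\Big)=M-\binom{k+2}{2}-e,$$
which is exactly the prescribed value; hence $e_r(2,m)\ge p_k+\lfloor q^{e-1}\rfloor$. (The case $e=0$ degenerates to $\X=\Lambda$, recovering the $r\le m+1$ value.)

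For the upper bound I would induct on $m$, the base case $m=1$ being \eqref{ermone}, which gives $e_r(2,1)=3-r$. Let $U=\mathrm{span}(F_1,\dots,F_r)$, $\X=\V(U)$, and introduce the $t$-\emph{invariant} $t:=\max_{L}\dim\{F\in U:L\mid F\}$, the maximum over nonzero $L\in S_1$. If $t=r$, then one $L$ divides every $F_i$, which forces $r\le m+1$ (the $F_i$ lie in the $(m+1)$-dimensional space $L\cdot S_1$) and hence $k=m-1$, $e=m-r+1$; Lemma \ref{lem:factorL} then yields $|\X|\le H_r(1,m)+p_{m-1}=\lfloor q^{m-r}\rfloor+p_{m-1}$, which is the asserted value. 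The substance lies in the case $t<r$, where $\X$ contains no hyperplane. Here I would use two complementary estimates. When $t$ is small I would apply Lemma \ref{lem:Zanella}: for $H=\V(L)$ the restricted system has dimension $r-t_L\ge 1$, so the induction hypothesis gives $|\X\cap H|\le e_{r-t_L}(2,m-1)$ and thus $|\X|\le q\max_H e_{r-t_L}(2,m-1)+1$. When $t$ is large I would instead peel off a linear form $L$ attaining the maximum: writing the quadrics of $U$ divisible by $L$ as $LG_1,\dots,LG_t$, one has the decomposition $\X=(\X\cap\V(L))\cup\V(G_1,\dots,G_t,F_{t+1},\dots,F_r)$, and I would bound the first piece by an $(m-1)$-dimensional instance of the theorem and the second by intersecting the linear variety $\V(G_1,\dots,G_t)$ with the residual quadrics using Theorem \ref{thm:AffineLachaud} and Lemma \ref{lem:AffineLachaudrefined}.

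I expect the main obstacle to be reconciling these two regimes so that, for \emph{every} intermediate value of $t$, at least one of the bounds returns the single block value $p_k+\lfloor q^{e-1}\rfloor$ with no gap. The tension is intrinsic: taking a maximum over hyperplanes in Lemma \ref{lem:Zanella} rewards the most degenerate sections, so the recursive bound is weakest exactly when many quadrics share a factor, whereas the factor-peeling decomposition is sharpest precisely there. Making the two meet requires quantitative control of how a large $t$ constrains the residual system, and in particular a good bound on those hypersurface sections that carry no $\Fq$-linear component; this is where I anticipate invoking the Homma--Kim inequality (Theorem \ref{thm:HK}), together with careful bookkeeping of the lexicographic block indexed by $k$ to certify that no admissible configuration exceeds $p_k+\lfloor q^{e-1}\rfloor$.
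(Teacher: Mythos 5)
The paper itself contains no proof of this statement: Theorem~\ref{thm:Z} is imported verbatim from Zanella's paper (cited there as Thm.~3.4) and is used in this article purely as a known ingredient, so there is no internal argument to measure your proposal against; I can only assess it on its own terms. Your lower bound is correct and essentially complete. With $e:=\binom{m+2}{2}-\binom{k+2}{2}-r$, the union $\Lambda\cup\Pi$ of a $\PP^k$ and a $\PP^{e-1}$ meeting in a $\PP^{e-2}$ has $p_k+\lfloor q^{e-1}\rfloor$ points, it is cut out by quadrics, and your inclusion--exclusion count of the quadrics through it is valid because for linear subvarieties the degree-two parts of the two vanishing ideals sum to the degree-two part of the vanishing ideal of the intersection; this does produce $r$ independent quadrics with exactly the claimed number of common zeros, and the degenerate case $e=0$ is handled correctly.

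The upper bound, however, is a plan rather than a proof, and the step you defer is where the entire content of the theorem lives. Concretely: in the case $t_W=0$ your incidence argument yields $|\X|\le q\,e_r(2,m-1)+1$, and checking that this does not exceed $p_k+\lfloor q^{e-1}\rfloor$ requires tracking how the block index $k$ and the offset $e$ transform when $m$ is replaced by $m-1$ with $r$ fixed --- precisely the lexicographic bookkeeping you postpone. In the intermediate range of $t_W$ the two crude estimates you propose can each overshoot the target by lower-order terms unless supplemented by further structural observations; for example, with $m=3$, $r=5$, $t_W=2$, the factor-peeling bound gives $e_3(2,2)+|\PP^1|=2q+2$, which exceeds the true value $2q+1$ unless one additionally notes that the line $\V(G_1,G_2)$ already meets the hyperplane $\V(L)$, so only $q$ of its points lie in the complement. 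Whether every configuration of $(r,t_W,k)$ can be closed by such ad hoc repairs is exactly what a proof must establish, and you explicitly acknowledge not having done so. As it stands the proposal establishes $e_r(2,m)\ge \lfloor q^{e-1}\rfloor+p_k$ together with the case where all quadrics share a linear factor, but not the theorem.
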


We have now gathered all known results from the literature that we need. We finish this section by restating the following conjecture from \cite{DG}, which was alluded to in the Introduction.

\begin{conjecture}\label{conj}
Assume that $1 < d < q$ and $1 \le r \le \binom{m+d-1}{d-1}$. Then
$$
e_r(d,m) = H_r(d-1,m)+p_{m-1}.
$$
\end{conjecture}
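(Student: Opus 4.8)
The plan is to establish the two inequalities $e_r(d,m)\ge H_r(d-1,m)+p_{m-1}$ and $e_r(d,m)\le H_r(d-1,m)+p_{m-1}$ separately, with the second being where essentially all the work lies. For the lower bound I would make Lemma \ref{lem:factorL} sharp by an explicit construction: choose $f_1,\dots,f_r\in T_{\le d-1}$ that are linearly independent with $\gcd(f_1,\dots,f_r)=1$ and realize $e_r^{\AA}(d-1,m)=H_r(d-1,m)$ as in the Heijnen--Pellikaan formula \eqref{Hrdm}, homogenize each $f_i$ with respect to $X_0$ to degree $d-1$, and set $F_i:=X_0\cdot\tilde f_i\in S_d$. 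Then $\V(F_1,\dots,F_r)$ is the disjoint union of the hyperplane $\V(X_0)$, contributing $p_{m-1}$ points, and the affine zero set $\Ze(f_1,\dots,f_r)$, contributing $H_r(d-1,m)$ points, which yields the lower bound once one checks the $F_i$ remain linearly independent (valid since $r\le\binom{m+d-1}{d-1}=\dim T_{\le d-1}$).

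For the upper bound I would induct on $m$, taking as base cases $m=1$, where \eqref{ermone} gives $e_r(d,1)=d-r+1=H_r(d-1,1)+p_0$, and $d=2$, where Zanella's Theorem \ref{thm:Z} yields $e_r(2,m)=\lfloor q^{m-r}\rfloor+p_{m-1}=H_r(1,m)+p_{m-1}$ throughout the admissible range $r\le m+1$. For the inductive step, fix linearly independent $F_1,\dots,F_r\in S_d$ with $d>2$ and $r\le\binom{m+2}{2}$, and set $\X=\V(F_1,\dots,F_r)$. The first dichotomy is whether $\gcd(F_1,\dots,F_r)$ has a linear factor: if some $L\in S_1$ divides every $F_i$, then Lemma \ref{lem:factorL} immediately gives $|\X|\le H_r(d-1,m)+p_{m-1}$, the hypothesis $r\le\binom{m+2}{2}\le\binom{m+d-1}{d-1}$ for $d\ge 3$ guaranteeing consistency. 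Thus the whole difficulty is concentrated in the case where $F_1,\dots,F_r$ have no common linear factor, where the extremal configurations should no longer occur and one aims to show $|\X|$ is at most, and in fact strictly below, $H_r(d-1,m)+p_{m-1}$.

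In the no-common-factor case I would bound $|\X|$ by slicing, using Lemma \ref{lem:Zanella} to reduce to $a=\max_{\Hy}|\X\cap\Hy|$ together with a suitable instance of Proposition \ref{pro:Hrdm}, so that if a hyperplane section $\X\cap\Hy\subset\PP^{m-1}$ is still cut out by the full complement of $r$ independent restricted forms, the inductive hypothesis $|\X\cap\Hy|\le H_r(d-1,m-1)+p_{m-2}$ closes the estimate after multiplying by $q$ and adding $1$. The genuine obstacle is that restricting the system to a hyperplane $\Hy=\V(L)$ can drop its dimension by $\delta_L:=\dim_{\Fq}\bigl(\langle F_1,\dots,F_r\rangle\cap L\cdot S_{d-1}\bigr)$, and because $H_r(d-1,m-1)$ is \emph{decreasing} in $r$ by \eqref{Monotone}, a drop makes the naive inductive bound too weak. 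This is precisely what the $t$-invariant is designed to monitor: it records, over all linear forms $L$, the number of independent combinations of the $F_i$ divisible by $L$, and thereby quantifies the partial divisibility that a genuine common factor would exhibit.

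The main effort then goes into a finer analysis driven by the $t$-invariant. When several independent forms share a factor $L$ (large $\delta_L$), one peels off the hyperplane $\V(L)$, estimates the points off it via the refined affine B\'ezout bound of Lemma \ref{lem:AffineLachaudrefined} (whose $(d-1)d\,q^{m-2}$ and $(d-1)^2q^{m-2}$ estimates are tailored to systems in which one or two members drop degree after dehomogenization), and controls the points on $\V(L)$ by the inductive hypothesis applied to a smaller system; when no such factor is available, one instead invokes Serre--S{\o}rensen (Theorem \ref{thm:SerreSorr}) and Homma--Kim (Theorem \ref{thm:HK}) to cap the contribution of a single hypersurface section having no $\Fq$-linear component. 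The closed form \eqref{Hsmallr} for $H_r(d-1,m)$, valid precisely for $r\le\binom{m+2}{2}$ through the indices $i,j$ of \eqref{rij}, is what lets all of these numerical inequalities be checked exactly rather than merely up to slack; beyond $\binom{m+2}{2}$ the function $H_r$ loses such a transparent description and the method stalls, which is why the conjecture for larger $r$ would demand new combinatorial input. I expect the delicate bookkeeping required to make the $t$-invariant case split close the induction with no numerical loss to be the hardest part.
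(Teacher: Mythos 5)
First, a point about scope: the statement you are proving is a conjecture that the paper itself establishes only for $r\le\binom{m+2}{2}$ (Theorem \ref{thm:main}); your proposal targets exactly that range and concedes the method stalls beyond it, so on this point you are aligned with what the paper actually proves rather than with the full statement. Your lower-bound construction is precisely Lemma \ref{lem:othereq}, and your $t$-invariant analysis of the relatively prime case mirrors Lemmas \ref{lem:basic}--\ref{lem5}.

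There is, however, a genuine gap in your upper-bound argument: your dichotomy is ``some $L\in S_1$ divides every $F_i$'' versus ``no common linear factor,'' and this misses the case where $G:=\gcd(F_1,\dots,F_r)$ is nonconstant but has no linear factor (e.g.\ $G$ an irreducible quadric). In that case your second branch breaks down, because the $t$-invariant machinery rests on Lemma \ref{lem:AffineLachaudrefined}, whose hypothesis $\gcd(f_1,\dots,f_r)=1$ fails: the dehomogenizations all share the factor $G(1,x_1,\dots,x_m)$. The correct treatment (Lemma \ref{lem:factorG} and Propositions \ref{prop:factor1}, \ref{prop:factor2}) writes $F_i=GF_i'$, bounds $|\V(G)|$ by Homma--Kim, and then needs a bound on $|\V(F_1',\dots,F_r')|$ where the $F_i'$ have degree $d-c$ in the \emph{same} $\PP^m$. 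Your induction on $m$ alone supplies no hypothesis for smaller degree at fixed $m$; this is why the paper inducts on $d+m$, and why the boundary values $c=d-1$ and $c=d-2$ (where $d-c\in\{1,2\}$ falls outside the inductive range) must be handled separately via \eqref{ermone} and Zanella's Theorem \ref{thm:Z}. Relatedly, your remark that Homma--Kim caps ``a single hypersurface section'' when no linear factor is available misplaces where that theorem enters: it is used only to bound $|\V(G)|$ in this non-coprime branch, while the small-$t_W$ coprime cases are closed by Zanella's incidence Lemma \ref{lem:Zanella} together with Proposition \ref{pro:Hrdm}. To repair the proposal, replace the induction on $m$ by induction on $d+m$ and insert the degree-reduction branch for nonconstant gcd without linear factors.
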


This conjecture was proved to be correct for $r\le m+1$ and $d<q-1$ in \cite{DG}.
For $r=1$, the conjecture follows from Theorem~\ref{thm:SerreSorr}, whereas
for $d=2$, it 
follows as a particular case of Theorem~\ref{thm:Z} [in view of \eqref{Hvsmallr}], or alternatively, as a  special case of \cite[Thm. 6.3]{DG}. 
Also when $m=1$, the conjecture is a trivial consequence of \eqref{ermone}.
Based on the above, we may always assume that $m>1$, $r>1$, and $d\ge 3$. We will provide significant more evidence for this conjecture by proving it for any pair $(d,r)$ satisfying $2 < d < q$ and $r \le \binom{m+2}{2}$. In particular, we show that the conjecture holds if $d=3$. The main step in our proof would be to show if $r \le \binom{m+2}{2}$ and if $F_1, \dots , F_r$ are any linearly independent 
polynomials in $S_d$, then
\begin{equation}
\label{maineq}
|\V(F_1, \dots , F_r) | \le H_r(d-1,m)+p_{m-1}.
\end{equation}
The equality in Conjecture~\ref{conj} is established by using \eqref{Hrdm} to show that there exists a family of polynomials where the upper the bound in \eqref{maineq} is attained.

\section{Reduction to the relatively prime case}

In order to prove \eqref{maineq} for any linearly independent  $F_1, \dots , F_r\in S_d$, we will establish
in this section auxiliary results that deal with
the case when $\gcd(F_1, \dots , F_r)$ has degree $c>1$. Since
\eqref{maineq} is known already when $r=1$, we will usually assume that $r>1$. Note that when $r>1$, the linear independence of $F_1, \dots , F_r$ implies that $c<d$.

\begin{lemma}\label{lem:factorG}
Assume that $r>1$ and $1< d\le q$. Let $F_1,\dots,F_r\in S_d$ be linearly independent and $G$ be a gcd of $F_1, \dots , F_r$ and let $c:=\deg G$. Let $F'_1, \dots , F'_r\in S_{d-c}$ be such that $F_i = GF'_i$ for $i=1, \dots , r$. Suppose $c >0$ and $G$ has no linear factors. Then
$$
|\V(F_1,\dots,F_r)| < c q^{m-1}+|\V(F_1',\dots,F_r')|.
$$
\end{lemma}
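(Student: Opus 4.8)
The plan is to exploit the fact that the zero set of the $F_i$ splits as a union governed by the common factor $G$. Since $F_i = G F_i'$ for every $i$, a rational point $P\in\PP^m(\Fq)$ satisfies $F_i(P)=G(P)F_i'(P)=0$ for all $i$ precisely when either $G(P)=0$ or $F_i'(P)=0$ for all $i$. Hence the first step is to record the set-theoretic identity
$$
\V(F_1,\dots,F_r) = \V(G)\cup \V(F_1',\dots,F_r'),
$$
which immediately yields $|\V(F_1,\dots,F_r)|\le |\V(G)|+|\V(F_1',\dots,F_r')|$ by subadditivity of cardinality on a union. The whole problem then reduces to proving the \emph{strict} bound $|\V(G)| < c\,q^{m-1}$, after which the desired conclusion follows directly.

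To bound $|\V(G)|$, I would invoke the Homma--Kim estimate (Theorem~\ref{thm:HK}). First I must check its hypothesis applies: $\V(G)$ is a hypersurface of degree $c\ge 2$ (note $c>0$ together with the absence of linear factors forces $c\ge 2$), and it has \emph{no} $\Fq$-linear component. Indeed, an $\Fq$-linear component of $\V(G)$ over $\overline{\FF}_q$ corresponds to a linear form $L\in S_1$ dividing $G$; since $L$ and $G$ lie in the UFD $S=\Fq[X_0,\dots,X_m]$, such a factor would be a genuine linear factor of $G$ in $S$, contradicting the hypothesis that $G$ has no linear factors. Thus Theorem~\ref{thm:HK} gives
$$
|\V(G)| \le (c-1)q^{m-1} + c\,q^{m-2} + p_{m-3}.
$$

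It remains to verify the numerical inequality $(c-1)q^{m-1} + c\,q^{m-2} + p_{m-3} < c\,q^{m-1}$, equivalently $c\,q^{m-2} + p_{m-3} < q^{m-1}$. Here I would use the two bounds on the relevant parameters: since $r>1$, linear independence of $F_1,\dots,F_r$ gives $c<d\le q$, so $c\le q-1$ and hence $c\,q^{m-2}\le (q-1)q^{m-2} = q^{m-1}-q^{m-2}$; and the elementary estimate $p_{m-3} = 1+q+\cdots+q^{m-3} < q^{m-2}$ (valid for $q\ge 2$) closes the gap, yielding $c\,q^{m-2}+p_{m-3} < q^{m-1}$. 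Combining with the union estimate gives $|\V(F_1,\dots,F_r)| < c\,q^{m-1} + |\V(F_1',\dots,F_r')|$, as claimed.

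I expect the main obstacle to be of a bookkeeping rather than a conceptual nature, concentrated in two places: correctly justifying that ``$G$ has no linear factors'' is exactly the hypothesis ``no $\Fq$-linear component'' required by Homma--Kim (the UFD argument above), and confirming that the gain $c\,q^{m-1}+p_{m-2}$ from Serre--S{\o}rensen is \emph{insufficient}, so that the sharper Homma--Kim bound is genuinely needed to obtain the strict inequality with the smaller quantity $c\,q^{m-1}$. The small-$m$ cases ($m=1,2$, where $p_{m-3}=0$) should be checked separately but are immediate, since the inequality $c<q$ already suffices there.
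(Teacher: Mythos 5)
Your proposal is correct and follows essentially the same route as the paper: the union bound $|\V(F_1,\dots,F_r)|\le |\V(G)|+|\V(F_1',\dots,F_r')|$ combined with the Homma--Kim estimate for $|\V(G)|$ and the observation that $c\le q-1$ (from $c<d\le q$) forces $(c-1)q^{m-1}+cq^{m-2}+p_{m-3}<cq^{m-1}$. The paper's proof is just a terser version of yours; your extra checks (that ``no linear factor'' matches the ``no $\Fq$-linear component'' hypothesis, and the explicit verification $cq^{m-2}+p_{m-3}<q^{m-1}$) are sound.
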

\begin{proof}
Since $r>1$, we must have $c<d$ and so $c\le q-1$. Hence
using Theorem \ref{thm:HK}, we obtain 
$$
|\V(G)| \le (c-1)q^{m-1}+cq^{m-2}+p_{m-3} < c q^{m-1}.
$$
Since we clearly have $|\V(F_1,\dots,F_r)| \le |\V(G)|+|\V(F_1',\dots,F_r')|$, the lemma follows.
\end{proof}
Similar to the remark after Lemma \ref{lem:factorL}, one can deduce that if $G$ and $c$ are as in Lemma \ref{lem:factorG}, then 
we necessarily have
$r \le \binom{m+d-c}{d-c}$, 
since $F_1,\dots,F_r \in G\cdot S_{d-c}$. This gives an alternative argument to show that if $r>1$, then
$c<d$.

\begin{proposition}
\label{prop:factor1}
Assume that $1 < r \le \binom{m+2}{2}$ and $2< d\le q$. 
Let $F_1,\dots,F_r\in S_d$ be linearly independent and $G$ be a gcd of $F_1, \dots , F_r$ and let $c:=\deg G$. Let $F'_1, \dots , F'_r\in S_{d-c}$ be such that $F_i = GF'_i$ for $i=1, \dots , r$.
If $0 < c < d-2$ and $|\V(F_1',\dots,F_r')| \le H_r(d-c-1,m)+p_{m-1}$, then
$$|\V(F_1,\dots,F_r)| < H_r(d-1,m)+p_{m-1}.$$
\end{proposition}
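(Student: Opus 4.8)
The plan is to separate the contribution of $G$ from that of the relatively prime quotients $F_1',\dots,F_r'$ and to convert the hypothesis into the desired bound via Proposition~\ref{Hrvaryingd}. The starting point is the set-theoretic identity $\V(F_1,\dots,F_r)=\V(G)\cup\V(F_1',\dots,F_r')$, which gives $|\V(F_1,\dots,F_r)|\le |\V(G)|+|\V(F_1',\dots,F_r')|$. Since $0<c<d-2$ and $2<d\le q$, applying Proposition~\ref{Hrvaryingd} with $d-1$ in place of $d$ and the same $c$ yields $H_r(d-1,m)=cq^{m-1}+H_r(d-c-1,m)$. Combining this with the assumed bound $|\V(F_1',\dots,F_r')|\le H_r(d-c-1,m)+p_{m-1}$, I see that the whole statement would follow from the strict inequality $|\V(G)|<cq^{m-1}$. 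This is exactly the point where the argument must branch according to whether or not $G$ has an $\Fq$-linear factor.

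If $G$ has no $\Fq$-linear factor, the conclusion is immediate: Lemma~\ref{lem:factorG} already gives $|\V(F_1,\dots,F_r)|<cq^{m-1}+|\V(F_1',\dots,F_r')|$, and combining this with the hypothesis and the identity $H_r(d-1,m)=cq^{m-1}+H_r(d-c-1,m)$ closes this case, the strictness being inherited from the Homma--Kim estimate (Theorem~\ref{thm:HK}) built into Lemma~\ref{lem:factorG}.

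The main obstacle is the case in which $G$ does have a linear factor, for then $|\V(G)|$ can be as large as $cq^{m-1}+p_{m-2}$ and the crude union bound above is too weak. Here I would avoid $\V(G)$ altogether and dehomogenize as in Lemma~\ref{lem:factorL}: after a linear change of coordinates assume $X_0\mid G$ with multiplicity $e\ge 1$, so that $|\V(F_1,\dots,F_r)|=p_{m-1}+|\Ze(f_1,\dots,f_r)|$, where $f_i:=F_i(1,X_1,\dots,X_m)$. Writing $G=X_0^eG_0$ with $X_0\nmid G_0$, and setting $\bar G_0:=G_0(1,X_1,\dots,X_m)$ (of degree $c-e$) and $\bar F_i':=F_i'(1,X_1,\dots,X_m)$, one has $f_i=\bar G_0\,\bar F_i'$. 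The key observation is that $\gcd(\bar F_1',\dots,\bar F_r')=1$: a common affine factor of the $\bar F_i'$ would homogenize to a common factor of $F_1',\dots,F_r'$ (up to powers of $X_0$), forcing it to divide $\gcd(F_1',\dots,F_r')=1$. Hence $\gcd(f_1,\dots,f_r)=\bar G_0$, and splitting $\Ze(f_1,\dots,f_r)=\Ze(\bar G_0)\cup\Ze(\bar F_1',\dots,\bar F_r')$ and invoking Theorem~\ref{thm:AffineLachaud} for the hypersurface $\Ze(\bar G_0)$ gives $|\Ze(f_1,\dots,f_r)|\le (c-e)q^{m-1}+|\Ze(\bar F_1',\dots,\bar F_r')|$.

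At this stage everything reduces to a single estimate: since $e\ge 1$, it suffices to prove the strict bound $|\Ze(\bar F_1',\dots,\bar F_r')|<q^{m-1}+H_r(d-c-1,m)=H_r(d-c,m)=e_r^{\AA}(d-c,m)$, where the middle equality is again Proposition~\ref{Hrvaryingd} (legitimate because $d-c>2$). This is the heart of the matter and the step I expect to be hardest: Heijnen--Pellikaan (Theorem~\ref{HP}) only yields the \emph{non-strict} bound $|\Ze(\bar F_1',\dots,\bar F_r')|\le H_r(d-c,m)$, so strictness must come from the relative primality of the $\bar F_i'$. Because $\gcd(\bar F_1',\dots,\bar F_r')=1$ and $r\ge 2$, their common zero locus has codimension at least $2$, and Lemma~\ref{lem:AffineLachaudrefined} (or the degree-counting argument in its proof) bounds it by a quantity of order $q^{m-2}$ — an order of magnitude below the $\Theta(q^{m-1})$ size of $H_r(d-c,m)$. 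The delicate point, requiring the most careful bookkeeping, is to check that this $O(q^{m-2})$ estimate really does fall strictly below $H_r(d-c,m)$ for \emph{all} admissible $r\le\binom{m+2}{2}$; this uses $d-c\ge 3$ and $d-c<q$ to control the constants, and one must retain the lower-order terms of $H_r(d-c,m)$ via \eqref{Hsmallr} rather than only its leading term, precisely in the borderline regime where $q$ is close to $d-c+2$.
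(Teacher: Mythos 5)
Your treatment of the case where $G$ has no $\Fq$-linear factor is exactly the paper's argument (Lemma~\ref{lem:factorG} combined with the identity $H_r(d-1,m)=cq^{m-1}+H_r(d-c-1,m)$ from Proposition~\ref{Hrvaryingd}), and it is correct. The problem lies in the other case. When $G$ has a linear factor, the paper does something much simpler: that linear factor divides every $F_i$, so Lemma~\ref{lem:factorL} applies directly and gives $|\V(F_1,\dots,F_r)|\le H_r(d-1,m)+p_{m-1}$, with no reference to $c$ or to the hypothesis on the $F_i'$. (This yields only the non-strict inequality, but that is also all the paper's own proof establishes in this branch, and it is all that is used later, e.g.\ in Lemma~\ref{lem:maineq}.)

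Your attempt to force the strict inequality in the linear-factor case has a genuine gap at exactly the step you flag as delicate. After your (correct) reductions you need $|\Ze(\bar F_1',\dots,\bar F_r')| < H_r(d-c,m)$, and the only tool you offer for strictness is the gcd-based degree count behind Lemma~\ref{lem:AffineLachaudrefined}, which yields at best $|\Ze(\bar F_1',\dots,\bar F_r')|\le (d-c)^2q^{m-2}$ (note the lemma itself does not apply verbatim, since it requires one of the polynomials to have degree at most $d-c-1$, which need not hold here). This bound is \emph{not} below $H_r(d-c,m)$ in the borderline regime. For $r=\binom{m+2}{2}$ the lower-order terms in \eqref{Hsmallr} vanish, so $H_r(d-c,m)=(d-c-2)q^{m-1}$ and you would need $(d-c)^2<(d-c-2)q$; writing $d'=d-c$, this forces $q>d'+2+\tfrac{4}{d'-2}$, which fails for instance when $d=q$, $c=1$ (then $(q-1)^2>(q-3)q$) and even when $d=q-1$, $c=1$ (then $(q-2)^2>(q-4)q$). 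So your chain of inequalities does not close; indeed, showing that relatively prime families cannot attain the Heijnen--Pellikaan maximum is a substantially harder statement than anything available in the paper. The branch should simply be replaced by a direct appeal to Lemma~\ref{lem:factorL}.
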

\begin{proof}
If $G$ contains a linear factor and in particular, if $c=1$, then the result follows from Lemma \ref{lem:factorL}. Now suppose $G$ has no linear factors,  $1 < c < d-2$, and 
$|\V(F_1',\dots,F_r')| \le H_r(d-c-1,m)+p_{m-1}$. By
Lemma \ref{lem:factorG}, 
we see that
$$
|\V(F_1,\dots,F_r)| < cq^{m-1}+H_r(d-c-1,m) +p_{m-1}.
$$
On the other hand, changing $d$ to $d-1$ 
in 
\eqref{Hrminust},  we find $H_r(d-1,m)=cq^{m-1}+H_r(d-c-1,m)$. 
This yields the desired inequality. 
%
\end{proof}

The cases $c=d-2$ and $c=d-1$ that are not covered by Proposition \ref{prop:factor1} need to be dealt with independently. However, since the values of $e_r(1,m)$ and $e_r(2,m)$ are known for all 
permissible values of $r$, 
 this is not hard to do.

\begin{proposition}\label{prop:factor2}
Assume that $1 < r \le \binom{m+2}{2}$ and $2< d\le q$.
Let $F_1,\dots,F_r\in S_d$ be linearly independent and let $G$ be a gcd of $F_1, \dots , F_r$. Suppose
$c:=\deg G$ 
equals $d-2$ or $d-1$. Then
$$
|\V(F_1,\dots,F_r)| \le H_r(d-1,m)+p_{m-1}.
$$
\end{proposition}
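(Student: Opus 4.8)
The plan is to exploit the factorization $F_i = GF_i'$ with $F_1',\dots,F_r'\in S_{d-c}$, which are again linearly independent and satisfy $\gcd(F_1',\dots,F_r')=1$, together with the set-theoretic identity $\V(F_1,\dots,F_r)=\V(G)\cup\V(F_1',\dots,F_r')$, giving
$$
|\V(F_1,\dots,F_r)| \le |\V(G)| + |\V(F_1',\dots,F_r')|.
$$
If $G$ admits a linear factor $L\in S_1$, then $L$ divides every $F_i$ and Lemma~\ref{lem:factorL} immediately yields the claim. So I would assume henceforth that $G$ has no linear factor; then $c\ge 2$ (a degree-one $G$ would itself be a linear factor), the hypersurface $\V(G)$ has no $\Fq$-linear component, and Theorem~\ref{thm:HK} gives $|\V(G)| \le (c-1)q^{m-1}+cq^{m-2}+p_{m-3}$. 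The remaining work is to bound $|\V(F_1',\dots,F_r')|$ using the \emph{known} values of $e_r(1,m)$ and $e_r(2,m)$, and then verify a purely numerical inequality.

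\emph{Case $c=d-1$.} Here $F_1',\dots,F_r'$ are linearly independent linear forms, which forces $r\le m+1$, and $\V(F_1',\dots,F_r')$ is a projective linear subspace of dimension $m-r$, so $|\V(F_1',\dots,F_r')|=p_{m-r}$. Substituting the Homma--Kim bound with $c=d-1$ and using $H_r(d-1,m)=(d-2)q^{m-1}+\lfloor q^{m-r}\rfloor$ from \eqref{Hvsmallr}, the desired inequality reduces, after cancellation and the identity $p_{m-1}=q^{m-1}+q^{m-2}+p_{m-3}$, to
$$
(d-2)q^{m-2}+p_{m-r-1} \le q^{m-1}.
$$
Since $d\le q$ and $p_{m-r-1}\le p_{m-3}<q^{m-2}$ (recall $r\ge 2$), this holds comfortably.

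\emph{Case $c=d-2$.} In the no-linear-factor situation this forces $d\ge 4$. Now $F_1',\dots,F_r'$ are $r$ linearly independent quadrics with trivial gcd, so $|\V(F_1',\dots,F_r')|\le e_r(2,m)$, furnished by Zanella's Theorem~\ref{thm:Z}. The heart of the argument is the comparison of $e_r(2,m)$ with $H_r(2,m)$. Writing $r$ in the block form \eqref{rij} with indices $1\le i\le j\le m+1$, I would show that the Zanella parameter is exactly $k=m-i$ and the Zanella exponent equals $m-j$, so that $e_r(2,m)=\lfloor q^{m-j}\rfloor+p_{m-i}$. Comparing with $H_r(2,m)=\lfloor q^{m-i}\rfloor+\lfloor q^{m-j}\rfloor$ from \eqref{Hsmallr} then yields the clean identity
$$
e_r(2,m)=H_r(2,m)+p_{m-i-1}.
$$
On the other hand, Proposition~\ref{Hrvaryingd} (applied with the shift from $d-1$ to $2$) gives $H_r(d-1,m)=(d-3)q^{m-1}+H_r(2,m)$. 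Feeding in the Homma--Kim bound with $c=d-2$, these two facts reduce the target inequality to
$$
p_{m-i-1}\le (q+3-d)q^{m-2},
$$
which holds since $p_{m-i-1}\le p_{m-2}<2q^{m-2}\le (q+3-d)q^{m-2}$, using $i\ge 1$ and $d\le q$.

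The main obstacle is the combinatorial matching in the case $c=d-2$: identifying Zanella's parameter $k$ and its exponent with $m-i$ and $m-j$, i.e.\ aligning Zanella's block decomposition of $\{1,\dots,\binom{m+2}{2}\}$ with the block decomposition \eqref{rthtuple}--\eqref{rij} underlying $H_r(2,m)$. This amounts to checking that $\binom{m+2}{2}-\binom{m-i+2}{2}$ equals the last value of $r$ in the $i$th block, a short identity in binomial coefficients. Once it is in hand, everything else is elementary estimation with geometric sums, controlled uniformly by the hypothesis $d\le q$.
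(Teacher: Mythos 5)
Your proof is correct, and its skeleton is the same as the paper's: split on whether $G$ has a linear factor, dispatch that case with Lemma~\ref{lem:factorL}, and otherwise combine the union bound $|\V(F_1,\dots,F_r)|\le|\V(G)|+|\V(F_1',\dots,F_r')|$ with Theorem~\ref{thm:HK} and the known values of $e_r(1,m)$ and $e_r(2,m)$. The only genuine divergence is the endgame when $c=d-2$. The paper avoids your combinatorial matching entirely: it splits into $r\le m+1$ and $r>m+1$, bounds $e_r(2,m)$ crudely by $q^{m-2}+p_{m-1}$ resp.\ by $e_{m+2}(2,m)=\lfloor q^{m-2}\rfloor+p_{m-2}$, and compares against $H_{m+1}(d-1,m)$ resp.\ $H_{\binom{m+2}{2}}(d-1,m)$ using the monotonicity \eqref{Monotone}. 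Your route instead pins down $e_r(2,m)=\lfloor q^{m-j}\rfloor+p_{m-i}=H_r(2,m)+p_{m-i-1}$ via the block identity $\binom{m+2}{2}-\binom{m-i+2}{2}=(i-1)m-\binom{i-1}{2}+(m+1)$ (which does check out) and then shifts degrees with Proposition~\ref{Hrvaryingd}; this is sharper and uniform in $r$, at the cost of that alignment argument. Your $c=d-1$ case is the paper's argument with $p_{m-r}$ kept exact rather than bounded by $p_{m-2}$. Both versions close the required inequalities using only $d\le q$, so there is nothing to fix.
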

\begin{proof}
If $G$ contains a linear factor, then Lemma \ref{lem:factorL} gives the desired result. 
Now assume that $G$ has no linear factors. Then Theorem \ref{thm:HK} implies that
$$
|\V(G)| \le (c-1)q^{m-1}+cq^{m-2}+p_{m-3}.
$$
As in the previous proposition, let  $F'_1, \dots , F'_r\in S_{d-c}$ be such that $F_i = GF'_i$ for $i=1, \dots , r$. Then
$|\V(F_1,\dots,F_r)| \le |\V(G)|+|\V(F_1',\dots,F_r')|$. Consequently,
\begin{equation}
\label{eq:estimate}
|\V(F_1,\dots,F_r)| \le (c-1)q^{m-1}+cq^{m-2}+p_{m-3}+ e_r(d-c,m).
\end{equation}
First, let us suppose $c=d-1$. Then we necessarily have $1<r\le m+1$. Also in view of \eqref{ermone},
$e_r(1,m)=p_{m-r} \le p_{m-2}$. Thus \eqref{eq:estimate} implies that 
$$
|\V(F_1,\dots,F_r)| \le 
(d-2)q^{m-1}+(d-1)q^{m-2}+p_{m-3}+ p_{m-2}.
$$
Since $d \le q$, we see that the expression on the right-hand side of the above inequality is
strictly smaller  than 
$(d-2)q^{m-1}+p_{m-1}$.
Hence 
in view of \eqref{Hvsmallr} and \eqref{Monotone}, we see that 
$$
|\V(F_1,\dots,F_r)| < (d-2)q^{m-1}+p_{m-1} = H_{m+1}(d-1,m)+p_{m-1} \le H_r(d-1,m)+p_{m-1},
$$
as desired. 
Next, let us suppose $c=d-2$. Then by Theorem \ref{thm:Z}, we see that
$$
e_r(2,m)=\lfloor q^{m-r} \rfloor+p_{m-1} \le q^{m-2}+p_{m-1} \quad \text{if}  \; 1< r \le m+1,
$$
whereas
$$
e_r(2,m) \le e_{m+2}(2,m)=\lfloor q^{m-2} \rfloor +p_{m-2}  \quad \text{if } \; m+1<r \le \binom{m+2}{2}.
$$
Using this together with \eqref{eq:estimate} and the assumption that $d\le q$, we see that for $1< r \le m+1$,
$$
|\V(F_1,\dots,F_r)| \le (d-3)q^{m-1}+(d-2)q^{m-2}+p_{m-3}+q^{m-2}+p_{m-1} < (d-2)q^{m-1}+p_{m-1},
$$
and thus  in view of \eqref{Monotone}, we find $|\V(F_1,\dots,F_r)| \le H_{m+1}(d-1,m)+p_{m-1} \le H_r(d-1,m)+p_{m-1}$. 
Likewise, when  $m+1<r \le \binom{m+2}{2}$, from  \eqref{eq:estimate} and the assumption $d\le q$ we obtain
%
$$
|\V(F_1,\dots,F_r)| \le (d-3)q^{m-1}+(d-2)q^{m-2}+p_{m-3}+\lfloor q^{m-2} \rfloor +p_{m-2} < (d-3)q^{m-1}+p_{m-1}.
$$
In view of \eqref{Hsmallr},
the expression on the right is $H_{\binom{m+2}{2}}(d-1,m)+p_{m-1}$, which, thanks to \eqref{Monotone},
is less than or equal to $H_r(d-1,m)+p_{m-1}$.
This completes the proof.
\end{proof}

\section{The relatively prime case}
In this section, we will establish results that help in proving \eqref{maineq} when the polynomials $F_1, \dots , F_r$ are relatively prime. Note that for any linearly independent $F_1, \dots , F_r\in S_d$, the corresponding projective variety $\V(F_1, \dots , F_r)$ coincides with $\V(W)$, where $W$ is the $\Fq$-linear subspace of $S_d$ spanned by $F_1, \dots , F_r$. Moreover, we can replace $F_1, \dots , F_r$ by any other basis of $W$. We will thus focus on estimating $|\V(W)|$, where $W$ is any $r$-dimensional subspace of $S_d$ and take $F_1, \dots , F_r$ to be judiciously chosen basis elements of $W$. To this end, an important role will be played by an integer, that we call the \emph{$t$-invariant} of the subspace $W$, which is essentially the largest dimension of the space of polynomials in $W$ that are divisible by a linear homogeneous polynomial. More precisely, given any subspace $W \subseteq S_d$ and $0\ne L \in S_1$, we define $t_W(L):=\dim (W \cap L S_{d-1})$. Note that $0\le t_W(L)\le \dim W$. The \emph{$t$-invariant} of $W$ is defined by 
$$
t_W:=\max\{ t_W(L): L\in S_1, \; L\ne 0\}.
$$
Clearly, $0\le t_W\le \dim W$. Moreover, if $t_W=\dim W=r$, then there exists $0\ne L\in S_1$ such that $L$ divides every element of $W$. In particular, if $W$ is spanned by linearly independent $F_1, \dots , F_r\in S_d$ that are relatively prime, then $t_W < r$. Conversely, if $t_W < r = \dim W$, then for any $F_1, \dots , F_r \in S_d$ that form a basis of $W$, the polynomials $F_1, \dots , F_r$ do not have a common linear factor, or in other words, 
$\V(F_1, \dots , F_r)$ does not contain a hyperplane.

\subsection*{Context}
In this section, we will always assume that $2< d < q$ and $m>1$. Assumption on $r$ may vary and will be specified.

\smallskip

Our first lemma gives a basic set of inequalities that hold under the hypothesis that the inequality \eqref{maineq}, which we wish to prove, holds when $m$ is replaced by $m-1$.

\begin{lemma}
\label{lem:basic}
Assume that 
$1< r \le \binom{m+d-1}{d-1}$. Suppose
\begin{equation}
\label{eq:indhypo}
e_s(d,m-1) \le H_s(d-1,m-1) + p_{m-2} \quad \text{for}\quad 
1 \le s < r. 
\end{equation}
Then for any $r$-dimensional subspace $W$ of $S_d$ with $t:=t_W$ satisfying $1\le t < r$, 
we have
\begin{equation}
\label{eq:estimate2}
|\V(W)| \le H_{r-t}(d-1,m-1) + p_{m-2} + H_t(d-1,m) .
\end{equation}
Moreover, if $t=1$, then
\begin{equation}
\label{eq:estimate3}
|\V(W)| \le H_{r-1}(d-1,m-1) + p_{m-2} + d(d-1) q^{m-2},
\end{equation}
whereas if $t \ge 2$, then
\begin{equation}
\label{eq:estimate4}
|\V(W)| \le H_{r-t}(d-1,m-1) + p_{m-2} + (d-1)^2 q^{m-2}.
\end{equation}
\end{lemma}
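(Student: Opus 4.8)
The plan is to split $\V(W)$ along the hyperplane that witnesses the $t$-invariant and estimate the two pieces separately. Since $t=t_W\ge 1$, I would first choose a nonzero $L\in S_1$ with $t_W(L)=t$ and, after a linear change of coordinates, assume $L=X_0$, so that $W_0:=W\cap X_0S_{d-1}$ has dimension $t$. Writing $\PP^m=H\sqcup\AA^m$ with $H=\V(X_0)\cong\PP^{m-1}$ and $\AA^m=\{X_0\ne 0\}$, I would use the decomposition $|\V(W)|=|\V(W)\cap H|+|\V(W)\cap\AA^m|$ and bound the first summand by the induction hypothesis \eqref{eq:indhypo} and the second by the affine results of Section~\ref{subsec:2.1}.

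For the part on $H$, consider the restriction map $\rho\colon W\to\fq[X_1,\dots,X_m]_d$, $F\mapsto F(0,X_1,\dots,X_m)$. Its kernel is exactly $W_0$, so $\overline{W}:=\rho(W)$ has dimension $r-t$, and a point of $H$ lies in $\V(W)$ if and only if the corresponding point of $\PP^{m-1}$ lies in $\V(\overline{W})$. Hence $|\V(W)\cap H|\le e_{r-t}(d,m-1)$, and since $t\ge 1$ and $t<r$ give $1\le r-t<r$, applying \eqref{eq:indhypo} with $s=r-t$ yields $|\V(W)\cap H|\le H_{r-t}(d-1,m-1)+p_{m-2}$. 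For the affine part I would take a basis $G_1,\dots,G_t$ of $W_0$ and extend it by $F_1,\dots,F_{r-t}$ to a basis of $W$; dehomogenizing with respect to $X_0$ produces linearly independent $g_1,\dots,g_t$ of degree $\le d-1$ (as $X_0\mid G_i$) and $f_1,\dots,f_{r-t}$ of degree exactly $d$, with $\V(W)\cap\AA^m=\Ze(g_1,\dots,g_t,f_1,\dots,f_{r-t})$. To get \eqref{eq:estimate2} I would simply discard the $f_j$ and estimate $|\V(W)\cap\AA^m|\le|\Ze(g_1,\dots,g_t)|\le e_t^{\AA}(d-1,m)=H_t(d-1,m)$ by the Heijnen--Pellikaan formula \eqref{Hrdm} (valid since $d-1<q$ and $t\le\binom{m+d-1}{d-1}$ automatically, the $g_i$ lying in $T_{\le d-1}$); summing the two pieces gives \eqref{eq:estimate2}.

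For the refined bounds I would instead retain all $r$ dehomogenized polynomials and invoke Lemma~\ref{lem:AffineLachaudrefined}. As we are in the relatively prime case, $\gcd(F_1,\dots,F_r)=1$, and homogenizing a hypothetical common factor of the dehomogenizations shows that $g_1,\dots,g_t,f_1,\dots,f_{r-t}$ are themselves coprime in $T$, so the hypotheses of Lemma~\ref{lem:AffineLachaudrefined} are met. When $t=1$, the single low-degree polynomial $g_1$ has degree $\le d-1$, so the first part of that lemma gives $|\V(W)\cap\AA^m|\le d(d-1)q^{m-2}$, hence \eqref{eq:estimate3}; when $t\ge 2$, both $g_1$ and $g_2$ have degree $\le d-1$, so the second part gives $|\V(W)\cap\AA^m|\le (d-1)^2q^{m-2}$, hence \eqref{eq:estimate4}. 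The main obstacle I anticipate is organizing the projective/affine dichotomy around the $t$-invariant correctly: verifying that $\rho$ has kernel of dimension exactly $t$ (so the induction hypothesis applies at index $r-t$), and confirming that coprimality survives dehomogenization (so Lemma~\ref{lem:AffineLachaudrefined} is applicable and the refined estimates are legitimate). Once these bookkeeping points are in place, all three inequalities follow by directly combining the hyperplane bound with the appropriate affine bound.
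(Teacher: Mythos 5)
Your proof is correct and follows essentially the same route as the paper: decompose $\V(W)$ along the hyperplane $\V(X_0)$ that witnesses $t_W$, bound the hyperplane part by $e_{r-t}(d,m-1)$ via \eqref{eq:indhypo} (the restriction map having kernel exactly $W\cap X_0S_{d-1}$), and bound the affine part by the Heijnen--Pellikaan bound $H_t(d-1,m)$ for \eqref{eq:estimate2} and by Lemma~\ref{lem:AffineLachaudrefined} for \eqref{eq:estimate3}--\eqref{eq:estimate4}. You are in fact slightly more careful than the paper in observing that the refined bounds need $\gcd(F_1,\dots,F_r)=1$ --- a hypothesis left implicit in the lemma's statement but supplied by the section's standing context and by the way the lemma is applied in Case~2 of Lemma~\ref{lem:maineq} --- and in verifying that coprimality survives dehomogenization.
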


\begin{proof}
Let $W$ be any $r$-dimensional subspace $W$ of $S_d$ with $t:=t_W < r$. By a linear change of coordinates, we can and will assume that  $t=t_W(X_0)$. Now we can choose a basis $\{F_1, \dots , F_r\}$ of $W$ such that $\{F_1, \dots , F_t\}$ is a basis of $W\cap X_0S_{d-1}$. Let $F'_1, \dots , F'_t\in S_{d-1}$ be such that $F_i = X_0F'_i$ for $i=1, \dots , t$. Also let $f_1, \dots , f_r$ denote, respectively, the polynomials in $T$ obtained by putting $X_0=1$ in $F_1, \dots , F_r$. Note that $\deg f_i \le d-1$ for $i=1, \dots , t$ and $\deg f_i \le d$ for $i=t+1, \dots , r$. Intersecting $\V(W)$ with the hyperplane $\V(X_0)$ and its complement, we obtain
$$
|\V(W)| = |\V(F_{t+1}, \dots , F_r)\cap \V(X_0)| + |\Ze(f_1, \dots , f_r)| \le e_{r-t}(d,m-1) + |\Ze (f_1, \dots , f_r)| .
$$
Consequently, \eqref{eq:estimate2} follows from \eqref{eq:indhypo} and \eqref{Hrdm}, 
since $|\Ze(f_1, \dots , f_r)| \le |\Ze(f_1, \dots , f_t)|$. 
Moreover, \eqref{eq:estimate3} and \eqref{eq:estimate4} are easily
deduced from the inequality displayed above and Lemma \ref{lem:AffineLachaudrefined}.
\end{proof}

We shall now proceed to refine the inequalities in \eqref{eq:estimate2}--\eqref{eq:estimate4} into \eqref{maineq} by considering separately various possibilities for the $t$-invariant of a given subspace of $S_d$. It will be seen that in many cases we obtain a strict inequality.

\begin{lemma}
\label{lem1}
Assume that 
$1< r \le m+1$. Also suppose \eqref{eq:indhypo} holds. Let $W$ be an $r$-dimensional subspace of  $S_d$ satisfying $t_W = 1$. Then $|\V(W)| < H_r(d-1,m) + p_{m-1}$.
\end{lemma}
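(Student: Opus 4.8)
The plan is to feed the $t_W=1$ case of Lemma~\ref{lem:basic} into the explicit ``small $r$'' formula for $H_r$ and reduce the claim to a one-line numerical inequality. Since $t_W=1$ and $1<r\le m+1$, the hypotheses of Lemma~\ref{lem:basic} are satisfied: indeed $1\le t_W=1<r\le m+1\le\binom{m+d-1}{d-1}$ (the last inequality holds because $d>2$ and $m>1$). Thus inequality \eqref{eq:estimate3} applies and gives
$$
|\V(W)| \le H_{r-1}(d-1,m-1) + p_{m-2} + d(d-1) q^{m-2}.
$$

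First I would rewrite both relevant $H$-values in closed form. Because $2\le r\le m+1$, we have $1\le r-1\le m=(m-1)+1$, so \eqref{Hvsmallr}, applied with $d-1$ in place of $d$ and $m-1$ in place of $m$, yields $H_{r-1}(d-1,m-1) = (d-2)q^{m-2} + \lfloor q^{m-r}\rfloor$; applying \eqref{Hvsmallr} directly gives the target value $H_r(d-1,m) = (d-2)q^{m-1} + \lfloor q^{m-r}\rfloor$. Both uses are legitimate since $1\le d-1<q$. Substituting the first identity into the displayed bound and collecting the two coefficients of $q^{m-2}$ via $(d-2)+d(d-1)=d^2-2$, I obtain
$$
|\V(W)| \le (d^2-2)q^{m-2} + \lfloor q^{m-r}\rfloor + p_{m-2}.
$$

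Finally I would compare this with the target $H_r(d-1,m)+p_{m-1} = (d-2)q^{m-1} + \lfloor q^{m-r}\rfloor + p_{m-1}$. Cancelling the common term $\lfloor q^{m-r}\rfloor$ and using $p_{m-1}=q^{m-1}+p_{m-2}$, the asserted strict inequality becomes $(d^2-2)q^{m-2} < (d-1)q^{m-1}$, i.e., after dividing by $q^{m-2}$, simply $d^2-2 < (d-1)q$. This closes the argument at once from the standing assumption $d<q$: then $q\ge d+1$, so $(d-1)q \ge (d-1)(d+1) = d^2-1 > d^2-2$. I do not anticipate a genuine obstacle here, as the whole proof is a bookkeeping reduction; the only points demanding care are verifying that $r-1$ still lies in the admissible range $1\le r-1\le (m-1)+1$ so that the closed form \eqref{Hvsmallr} is valid in $m-1$ variables, and correctly merging the $q^{m-2}$ contributions before comparing against $(d-1)q^{m-1}$.
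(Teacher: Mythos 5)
Your proposal is correct and follows essentially the same route as the paper: both apply inequality \eqref{eq:estimate3} from Lemma \ref{lem:basic}, expand $H_{r-1}(d-1,m-1)$ and $H_r(d-1,m)$ via \eqref{Hvsmallr}, and finish with the arithmetic comparison using $d<q$ (the paper bounds $d(d-1)q^{m-2}$ by $(q-1)(d-1)q^{m-2}$ before collecting terms, while you collect terms first and reduce to $d^2-2<(d-1)q$, but this is only a cosmetic difference).
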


\begin{proof}
By Lemma \ref{lem:basic}, we see that \eqref{eq:estimate3} holds. This together with \eqref{Hvsmallr} gives
\begin{eqnarray*}
|\V(W)| &\le & H_{r-1}(d-1,m-1) + p_{m-2} + d(d-1) q^{m-2} \\
& = & (d-2) q^{m-2} +  \lfloor q^{m-r} \rfloor + p_{m-2} + d(d-1) q^{m-2} \\
& \le & (d-2) q^{m-2} +  \lfloor q^{m-r} \rfloor + p_{m-2} + (q-1)(d-2) q^{m-2}+ (q-1) q^{m-2} \\
&=&  (d-2) q^{m-1} +  \lfloor q^{m-r} \rfloor + p_{m-1}  - q^{m-2} \\
&< & H_r(d-1,m) + p_{m-1} ,
\end{eqnarray*}
where the last inequality uses  \eqref{Hvsmallr} and the assumption that $m\ge 2$.
\end{proof}

\begin{lemma}
\label{lem2}
Assume that 
$ 1< r \le {m+1}$. Also suppose \eqref{eq:indhypo} holds. Let $W$ be any $r$-dimensional subspace of  $S_d$ satisfying $2\le t_W <r$. Then $|\V(W)| < H_r(d-1,m) + p_{m-1}$.
\end{lemma}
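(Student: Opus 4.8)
The plan is to start from the estimate already supplied by Lemma~\ref{lem:basic}. Since the hypothesis $2 \le t_W < r$ puts us in the regime $t \ge 2$, inequality \eqref{eq:estimate4} applies, giving
$$
|\V(W)| \le H_{r-t}(d-1,m-1) + p_{m-2} + (d-1)^2 q^{m-2},
$$
where I write $t := t_W$. (Note that Lemma~\ref{lem:basic} is legitimately available, since $1<r\le m+1 \le \binom{m+d-1}{d-1}$ for $d\ge 2$.) The whole argument then reduces to bounding the right-hand side by $H_r(d-1,m)+p_{m-1}$. Because $r \le m+1$ and, as $2 \le t \le r-1$, also $1 \le r-t \le m-1$, both $H$-terms fall in the range where the closed formula \eqref{Hvsmallr} holds. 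Substituting $H_{r-t}(d-1,m-1) = (d-2)q^{m-2} + \lfloor q^{m-1-r+t}\rfloor$ turns everything into an explicit expression in $q$.

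Next I would carry out the coefficient bookkeeping. The two $q^{m-2}$-contributions combine to $\big((d-2)+(d-1)^2\big)q^{m-2} = (d^2-d-1)q^{m-2}$. For the floor term, the constraints $2 \le t \le r-1$ and $m \ge 2$ force the exponent $m-1-r+t$ to lie between $m+1-r \ge 0$ and $m-2 \ge 0$, so $\lfloor q^{m-1-r+t}\rfloor = q^{m-1-r+t} \le q^{m-2}$. Collecting these,
$$
|\V(W)| \le (d^2-d-1)q^{m-2} + q^{m-2} + p_{m-2} = d(d-1)q^{m-2} + p_{m-2}.
$$
On the other side, using $p_{m-1} = q^{m-1} + p_{m-2}$, the bound $\lfloor q^{m-r}\rfloor \ge 0$, and \eqref{Hvsmallr} for $H_r(d-1,m)$, one gets $H_r(d-1,m)+p_{m-1} \ge (d-1)q^{m-1} + p_{m-2}$.

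The crux — and essentially the only point where anything needs to be verified — is the resulting numerical inequality $d(d-1)q^{m-2} < (d-1)q^{m-1}$, i.e.\ $d(d-1) < (d-1)q$. Dividing by $d-1 > 0$ (valid since $d > 2$), this is exactly $d < q$, which is guaranteed by the standing assumption $2<d<q$ and yields the desired strict inequality. I do not anticipate a genuine obstacle: the argument closely mirrors the $t=1$ case of Lemma~\ref{lem1}, the single extra $q^{m-2}$ coming from the floor term in $H_{r-t}(d-1,m-1)$ being exactly what accounts for the passage from the coefficient $(d-1)^2$ in \eqref{eq:estimate4} to the effective coefficient $d(d-1)$. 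The one thing to be careful about is confirming that the floor exponent $m-1-r+t$ stays nonnegative across the admissible range of $t$, so that no term is silently discarded; this is precisely what the bound $r \le m+1$ secures.
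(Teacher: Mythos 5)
Your proof is correct, but it runs along a slightly different track from the paper's. The paper proves this lemma from the unconditional estimate \eqref{eq:estimate2}, i.e.\ it bounds the affine piece by $H_t(d-1,m)$ via Heijnen--Pellikaan, expands both $H$-terms by \eqref{Hvsmallr}, uses $t\ge 2$ to absorb the two floor terms into $2q^{m-2}$, and closes with $dq^{m-2}\le (q-1)q^{m-2}$ --- the same ``$d<q$'' pivot you end on. You instead start from \eqref{eq:estimate4}, replacing $H_t(d-1,m)$ by the B\'ezout-type bound $(d-1)^2q^{m-2}$; your bookkeeping ($(d-2)+1+(d-1)^2=d(d-1)$, then $d(d-1)q^{m-2}<(d-1)q^{m-1}$ iff $d<q$) is accurate, and the points you single out for verification --- that $r-t\le m$ so \eqref{Hvsmallr} applies to $H_{r-t}(d-1,m-1)$, and that the exponent $m-1-r+t$ lies in $\{0,\dots,m-2\}$ --- are exactly the ones that matter. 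In effect your argument is the specialization to $r\le m+1$ of the paper's proof of Lemma \ref{lem3}, which handles $m+1<r\le\binom{m+2}{2}$ by the same device, so it has the mild advantage of treating the two ranges of $r$ uniformly. The one caveat worth recording: \eqref{eq:estimate4} is derived in Lemma \ref{lem:basic} from Lemma \ref{lem:AffineLachaudrefined}, which requires $\gcd(f_1,\dots,f_r)=1$, a hypothesis not visible in the statement of Lemma \ref{lem:basic}; the paper's route through \eqref{eq:estimate2} needs no such input and therefore proves the lemma for an arbitrary subspace $W$ with $2\le t_W<r$, whereas yours is airtight only where that coprimality is available (as it is in Case 2 of Lemma \ref{lem:maineq}, the only place the lemma is used).
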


\begin{proof}
Let $t:=t_W$.
By Lemma \ref{lem:basic}, we see that 
$ |\V(W)| \le  H_{r-t}(d-1,m-1) + p_{m-2} + H_t(d-1,m)$.
Now since $t\le m$ and $r-t\le m-1$, we see from \eqref{Hvsmallr} that
$$
|\V(W)| \le  (d-2) q^{m-2} + q^{(m-1)-(r-t)} + p_{m-2} 
+  (d-2) q^{m-1} + q^{m-t}. 
$$
Further, since $2\le t<r$, we find $(m-1)-(r-t) \le m-2$ and $m-t\le m-2$. Consequently,
$ q^{(m-1)-(r-t)} + q^{m-t} \le 2q^{m-2}$. Thus the above estimate simplifies to
$$
|\V(W)| \le d q^{m-2} + (d-2) q^{m-1} + p_{m-2} \le (d-2) q^{m-1} + p_{m-1}  - q^{m-2} < (d-2) q^{m-1} + p_{m-1} ,
$$
where the second inequality uses 
$d\le q-1$. Also 
$H_r(d-1, m) = (d-2) q^{m-1} + \lfloor q^{m-r} \rfloor$, thanks to \eqref{Hvsmallr}. Thus  $(d-2) q^{m-1} \le H_r(d-1, m)$, 
which yields $|\V(W)| < H_r(d-1,m) + p_{m-1}$.
\end{proof}

\begin{lemma}
\label{lem3}
Assume that 
$m+1< r \le \binom{m+2}{2}$. Also suppose \eqref{eq:indhypo} holds. Let $W$ be any $r$-dimensional subspace of  $S_d$ satisfying $2\le t_W \le m+ 1$. Then $|\V(W)| \le  H_r(d-1,m) + p_{m-1}$.
\end{lemma}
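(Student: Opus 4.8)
The plan is to feed the estimate of Lemma~\ref{lem:basic} into the explicit formula \eqref{Hsmallr} for $H_r(d-1,m)$, exploiting a shift in the lexicographic index that makes the two floor terms cancel exactly. Write $t:=t_W$; after a linear change of coordinates assume $t=t_W(X_0)$, and pick a basis $F_1,\dots,F_r$ of $W$ with $F_1,\dots,F_t\in X_0S_{d-1}$. In the relatively prime case under consideration in this section the dehomogenizations $f_1,\dots,f_r$ are coprime, and $f_1,f_2$ have degree $\le d-1$ because $X_0$ divides $F_1,F_2$; hence Lemma~\ref{lem:AffineLachaudrefined} bounds the affine part by $(d-1)^2q^{m-2}$. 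Intersecting $\V(W)$ with $\V(X_0)$ and its complement exactly as in the proof of Lemma~\ref{lem:basic} then gives
$$|\V(W)| \le e_{r-t}(d,m-1) + (d-1)^2 q^{m-2}.$$
The decisive choice is to bound the affine part by this B\'ezout-type term of $q$-degree $m-2$ rather than by $H_t(d-1,m)$ (which has $q$-degree $m-1$); the latter is too weak and fails already in small cases such as $m=2$, $d\ge 4$.

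Next I would collapse the hyperplane term to a single, in-range index. Since $t\le m+1$ we have $r-t\ge r-m-1\ge 1$, and because $e_s(d,m-1)$ is non-increasing in $s$ (adjoining a form can only shrink the variety), $e_{r-t}(d,m-1)\le e_{r-m-1}(d,m-1)$. The hypothesis \eqref{eq:indhypo}, applied at $s=r-m-1<r$, then yields $e_{r-m-1}(d,m-1)\le H_{r-m-1}(d-1,m-1)+p_{m-2}$. Routing through $e$ in this way, rather than through $H_{r-t}$ directly, keeps us inside the range where $H_{\bullet}(d-1,m-1)$ is defined; this is a genuine concern when $d=3$, where $r-t$ may exceed $\binom{m+1}{2}$.

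The crux is a combinatorial identity. Writing $r=(i-1)m-\binom{i-1}{2}+j$ with $2\le i\le j\le m+1$ as in \eqref{rij} (here $i\ge 2$ precisely because $r>m+1$), a short computation shows
$$r-m-1 = (i-2)(m-1)-\binom{i-2}{2}+(j-1),$$
so that $r-m-1$ is exactly the index attached, via \eqref{rij} in $m-1$ variables, to the pair $(i-1,j-1)$, with $1\le i-1\le j-1\le m$; in particular $r-m-1\le\binom{m+1}{2}$, so \eqref{Hsmallr} applies in $m-1$ variables with $d$ replaced by $d-1$ and gives
$$H_{r-m-1}(d-1,m-1) = (d-3)q^{m-2} + \lfloor q^{m-i}\rfloor + \lfloor q^{m-j}\rfloor.$$
I expect verifying this identity (and the bound $r-m-1\le\binom{m+1}{2}$) to be the main obstacle, since it is what produces precisely the floor terms occurring in $H_r(d-1,m)$, so that they will cancel in the end.

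Finally I would assemble the three displays into
$$|\V(W)| \le (d-3)q^{m-2} + (d-1)^2q^{m-2} + \lfloor q^{m-i}\rfloor + \lfloor q^{m-j}\rfloor + p_{m-2}.$$
Since $i\ge 2$, \eqref{Hsmallr} gives $H_r(d-1,m)=(d-3)q^{m-1}+\lfloor q^{m-i}\rfloor+\lfloor q^{m-j}\rfloor$, whence $H_r(d-1,m)+p_{m-1}=(d-2)q^{m-1}+\lfloor q^{m-i}\rfloor+\lfloor q^{m-j}\rfloor+p_{m-2}$. The floor terms and $p_{m-2}$ cancel, and the desired inequality reduces to $(d-3)+(d-1)^2\le (d-2)q$. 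As $(d-3)+(d-1)^2=(d-2)(d+1)$ and $q>d$, this is exactly $q\ge d+1$, which holds; the equality case $q=d+1$ is consistent with the non-strict conclusion of the lemma.
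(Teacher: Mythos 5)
Your proposal is correct and follows essentially the same route as the paper: the paper packages your opening estimate as inequality \eqref{eq:estimate4} of Lemma \ref{lem:basic}, then uses the very same index-shift identity $r-(m+1)=(i-2)(m-1)-\binom{i-2}{2}+(j-1)$ together with \eqref{Hsmallr} and the bound $(d-3)+(d-1)^2=(d-2)(d+1)\le (d-2)q$. The only (harmless) difference is that you pass from index $r-t$ to $r-(m+1)$ via monotonicity of $e_s(d,m-1)$ before invoking \eqref{eq:indhypo}, whereas the paper invokes \eqref{eq:indhypo} first and then applies \eqref{Monotone} to $H_{r-t}(d-1,m-1)$.
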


\begin{proof}
Let $t:=t_W$.
By Lemma \ref{lem:basic}, we see that \eqref{eq:estimate4} holds. In view of  \eqref{Monotone}, this gives
$$
|\V(W)| \le  H_{r-t}(d-1,m-1) + p_{m-2} + (d-1)^2 q^{m-2}
\le  H_{r-(m+1)}(d-1,m-1) + p_{m-2} + (d-1)^2 q^{m-2} .
$$
Now since $m+1< r \le \binom{m+2}{2}$, there are unique $i,j\in \Z$ satisfying conditions as in \eqref{rij}, namely,
$$
r = (i-1)m - \binom{i-1}{2} + j \quad \text{and} \quad 2 \le i \le j \le m+1.
$$
This implies that an equation for $r-(m+1)$ such as \eqref{rij}  with $m$ changed to $m-1$,  is given by
$$
r - (m+1) = (i-2)(m-1) - \binom{i-2}{2} + (j-1) \quad \text{and} \quad 1 \le (i-1) \le ( j-1) \le m.
$$
Thus using \eqref{Hsmallr}, we see that $H_r(d-1,m) = (d-3)q^{m-1} +  \lfloor q^{m-i} \rfloor +  \lfloor q^{m-j} \rfloor $ and moreover,
$$
H_{r-(m+1)} (d-1,m-1) = (d-3)q^{m-2} +  \lfloor q^{m-i} \rfloor +  \lfloor q^{m-j} \rfloor ,
$$
where we note that $H_{r-(m+1)} (d-1,m-1)$ is well-defined since $r-(m+1) \le  \binom{m+1}{2}\le  \binom{m+d-2}{d-1}$, thanks to our assumptions on $d,m$ and $r$.
Using this in the above estimate for $|\V(W)| $, we obtain
\begin{eqnarray*}
|\V(W)| &\le & (d-3)q^{m-2} + p_{m-2} + (d-1)^2 q^{m-2} +  \lfloor q^{m-i} \rfloor +  \lfloor q^{m-j} \rfloor \\
& = & (d-2)(d+1) q^{m-2} + p_{m-2} + \lfloor q^{m-i} \rfloor +  \lfloor q^{m-j} \rfloor \\
& \le & (d-2) q^{m-1} + p_{m-2} + \lfloor q^{m-i} \rfloor +  \lfloor q^{m-j} \rfloor\\
& = & (d-3) q^{m-1} +   p_{m-1} + \lfloor q^{m-i} \rfloor +  \lfloor q^{m-j} \rfloor\\
&=&  H_r(d-1,m) + p_{m-1}  ,
\end{eqnarray*}
where the second inequality above uses   the assumption that $d\le q-1$.
\end{proof}

\begin{lemma}
\label{lem4}
Assume that 
$m+1 < r \le \binom{m+2}{2}$. Also suppose \eqref{eq:indhypo} holds. Let $W$ be any $r$-dimensional subspace of  $S_d$ satisfying $m+1 < t_W <r$. Then $|\V(W)| \le H_r(d-1,m) + p_{m-1}$.
\end{lemma}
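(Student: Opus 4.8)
The plan is to feed the subspace $W$ directly into Lemma~\ref{lem:basic} and then collapse the whole statement to one short inequality between values of $H$. Writing $t:=t_W$, the hypotheses $m+1 < t < r \le \binom{m+2}{2}$ guarantee that Lemma~\ref{lem:basic} applies: indeed $r \le \binom{m+2}{2} \le \binom{m+d-1}{d-1}$ since $d\ge 3$, and $1 < t < r$, so under \eqref{eq:indhypo} the estimate \eqref{eq:estimate2} gives $|\V(W)| \le H_{r-t}(d-1,m-1) + p_{m-2} + H_t(d-1,m)$. The one genuinely important choice is to retain the sharp term $H_t(d-1,m)$ here rather than pass to the coarser affine bound \eqref{eq:estimate4}: replacing $H_t(d-1,m)$ by $(d-1)^2q^{m-2}$ discards the dominant contribution $(d-3)q^{m-1}$ of $H_t$, and the resulting bound can fail in the boundary regime $q=d+1$ (for instance when $r=\binom{m+2}{2}$ and $t=r-1$, where it overshoots the target by an amount of order $q^{m-2}$). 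Keeping $H_t$ is exactly what makes that dominant term cancel later.

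Next I would make the two second-block terms explicit. Since $t>m+1$ and $r>m+1$, the indices associated to them via \eqref{rij} satisfy $2\le i'\le j'\le m+1$ and $2\le i\le j\le m+1$ (recall from the remark after \eqref{Hsmallr} that the first index equals $1$ precisely when the parameter is $\le m+1$). Thus \eqref{Hsmallr} gives $H_t(d-1,m) = (d-3)q^{m-1} + \lfloor q^{m-i'}\rfloor + \lfloor q^{m-j'}\rfloor$ and $H_r(d-1,m) = (d-3)q^{m-1} + \lfloor q^{m-i}\rfloor + \lfloor q^{m-j}\rfloor$. Substituting these into the target inequality $|\V(W)| \le H_r(d-1,m)+p_{m-1}$ and using $p_{m-1}=qp_{m-2}+1$, so that $p_{m-1}-p_{m-2}=q^{m-1}$, the common term $(d-3)q^{m-1}$ cancels and the entire statement reduces to
$$
H_{r-t}(d-1,m-1) + \lfloor q^{m-i'}\rfloor + \lfloor q^{m-j'}\rfloor \le \lfloor q^{m-i}\rfloor + \lfloor q^{m-j}\rfloor + q^{m-1}.
$$

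Finally I would dispatch this last inequality by crude estimates, which is where the size of $t$ pays off. On the right I simply drop the two nonnegative floor terms, leaving $q^{m-1}$. On the left, since $i',j'\ge 2$ and $m\ge 2$, each of $\lfloor q^{m-i'}\rfloor$ and $\lfloor q^{m-j'}\rfloor$ is at most $q^{m-2}$; moreover, by the monotonicity \eqref{Monotone} we have $H_{r-t}(d-1,m-1)\le H_1(d-1,m-1) = (d-1)q^{m-2}$, and this trivial bound holds regardless of whether $r-t$ lies in the first or the second block for $m-1$ variables, so no case analysis on $r-t$ is needed. Hence the left side is at most $(d+1)q^{m-2}$, and the inequality follows from $d+1\le q$, i.e.\ from $d<q$. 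I do not expect any serious obstacle beyond the single conceptual point flagged in the first paragraph: the largeness of $t$ forces $r-t$ to be small enough that the sharp estimate \eqref{eq:estimate2} together with the crudest possible bound $H_{r-t}(d-1,m-1)\le H_1(d-1,m-1)$ already closes the argument.
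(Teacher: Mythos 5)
Your proposal is correct and follows essentially the same route as the paper: both start from \eqref{eq:estimate2} (deliberately retaining the sharp term $H_t(d-1,m)$ rather than using \eqref{eq:estimate4}), bound $H_{r-t}(d-1,m-1)\le H_1(d-1,m-1)=(d-1)q^{m-2}$ via \eqref{Monotone}, and reduce everything to $(d+1)q^{m-2}\le q^{m-1}$, i.e.\ $d\le q-1$. The only cosmetic difference is that the paper bounds $H_t(d-1,m)\le H_{m+2}(d-1,m)$ and uses \eqref{Hmplustwo}, whereas you keep the indices $i',j'$ from \eqref{Hsmallr} and cancel the common $(d-3)q^{m-1}$ against $H_r(d-1,m)$ directly; the resulting estimates are identical.
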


\begin{proof}
Let $t:=t_W$.
By Lemma \ref{lem:basic}, we see that 
$ |\V(W)| \le  H_{r-t}(d-1,m-1) + p_{m-2} + H_t(d-1,m)$. Here $t\ge m+2$ and $r-t\ge 1$.
Hence from \eqref{Monotone},  we see that
$$
|\V(W)| \le H_1(d-1,m-1) + p_{m-2} + H_{m+2}(d-1,m) .
$$
Consequently, using   \eqref{Hvsmallr} and \eqref{Hmplustwo}, we obtain
\begin{eqnarray*}
|\V(W)| &\le &  (d-2) q^{m-2}   + \lfloor q^{m-2} \rfloor + p_{m-2} + (d-3) q^{m-1} + 2  \lfloor q^{m-2} \rfloor \\
&=& (d-3) q^{m-1} + (d+1)  q^{m-2} + p_{m-2} .
\end{eqnarray*}
Since $d\le q-1$, this gives 
$ |\V(W)| \le  (d-3) q^{m-1} + p_{m-1}$,  
and so in view of \eqref{Hsmallr}, we  conclude that 
$ |\V(W)|  \le H_{r}(d-1,m)+p_{m-1} $.
\end{proof}

It remains to prove \eqref{maineq} when $t_W=0$ and also when $t_W=1$ and $m+1< r \le \binom{m+2}{2}$. Here we need a slightly different technique.

\begin{lemma}
\label{lem5}
Assume that 
$ 1< r \le \binom{m+2}{2}$. Also suppose \eqref{eq:indhypo} holds. Let $W$ be any $r$-dimensional subspace of  $S_d$ satisfying either (i) $t_W=0$ or (ii) $t_W=1$ and $m+1 < r \le \binom{m+2}{2}$. Then $|\V(W)|  \le H_r(d-1,m) + p_{m-1}$.
\end{lemma}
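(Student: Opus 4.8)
The plan is to abandon the affine decomposition of Lemma~\ref{lem:basic} and instead control $|\V(W)|$ through its hyperplane sections by means of Zanella's Lemma~\ref{lem:Zanella}. The reason is structural: when $t_W\in\{0,1\}$ the polynomials in $W$ (or all but one of them) fail to be divisible by the chosen linear form, so their dehomogenizations retain the full degree $d$ and the sharp affine estimates of Lemma~\ref{lem:AffineLachaudrefined} are unavailable. Setting $a:=\max_{\Hy}|\V(W)\cap\Hy|$, where $\Hy$ runs over hyperplanes of $\PP^m$, Lemma~\ref{lem:Zanella} gives $|\V(W)|\le aq+1$; since $q\,p_{m-2}+1=p_{m-1}$, it therefore suffices to prove $a\le H_r(d-1,m-1)+p_{m-2}$ in case~(i) and $a\le H_{r-1}(d-1,m-1)+p_{m-2}$ in case~(ii), and then to absorb the factor $q$ using Proposition~\ref{pro:Hrdm}.

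To estimate a single section I would fix $\Hy=\V(L)$ and, after a linear change of coordinates, take $L=X_0$. Then $\V(W)\cap\Hy$ is the zero set in $\Hy\cong\PP^{m-1}$ of the space $\overline{W}$ of degree-$d$ forms in $X_1,\dots,X_m$ obtained by restricting $W$ to $X_0=0$, and $\dim\overline{W}=r-t_W(X_0)$, so $|\V(W)\cap\Hy|\le e_{r-t_W(L)}(d,m-1)$ for every $L$. In case~(ii) we have $t_W=1$, hence $\dim\overline{W}\ge r-1$ for all $L$; using that $e_s(d,m-1)$ is non-increasing in $s$, together with \eqref{eq:indhypo} at the admissible index $s=r-1<r$, I would obtain $a\le e_{r-1}(d,m-1)\le H_{r-1}(d-1,m-1)+p_{m-2}$. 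Feeding this into $|\V(W)|\le aq+1$ gives $|\V(W)|\le qH_{r-1}(d-1,m-1)+p_{m-1}$, and since $m+1<r$ here, Proposition~\ref{pro:Hrdm}(ii) yields $qH_{r-1}(d-1,m-1)\le H_r(d-1,m)$, which is exactly \eqref{maineq}. The identical computation settles case~(i) for $r>m$, because there too $\dim\overline W\ge r-1$ and $e_r\le e_{r-1}$.

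Case~(i) for small $r$ (roughly $1<r\le m$) is where the argument must be sharpened, and I expect this to be the main obstacle. The estimate $a\le e_{r-1}(d,m-1)$ is now too lossy: comparing with \eqref{Hvsmallr} one checks that $qH_{r-1}(d-1,m-1)$ overshoots $H_r(d-1,m)$ by about $q^{m-r+1}$. The key is that the hypothesis $t_W=0$ forces $t_W(L)=0$ for \emph{every} $L$, so the restriction $W\to\overline{W}$ is injective for every hyperplane and $\dim\overline{W}=r$ throughout. This gives the sharper $a\le e_r(d,m-1)$, whereupon Proposition~\ref{pro:Hrdm}(i), namely $qH_r(d-1,m-1)\le H_r(d-1,m)$, closes the gap exactly. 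The price is that one must bound $e_r(d,m-1)$ by $H_r(d-1,m-1)+p_{m-2}$, i.e.\ invoke the dimension-$(m-1)$ case of the very inequality \eqref{maineq} being proved, at the \emph{same} index $s=r$ rather than only at indices $s<r$ as in \eqref{eq:indhypo}; this instance is precisely what the outer induction on $m$ must supply.

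Finally, one must track the range of validity, and this is where the remaining difficulty concentrates. Both combining steps require $r\le\binom{m+d-2}{d-1}$, the range in which Proposition~\ref{pro:Hrdm} and the dimension-$(m-1)$ estimate are meaningful; for $d\ge 4$ with $m$ not too small this contains all of $1<r\le\binom{m+2}{2}$, but for $d=3$, where $\binom{m+d-2}{d-1}=\binom{m+1}{2}<\binom{m+2}{2}$, the top $m+1$ values of $r$ fall outside it, and the corresponding sections are spaces of forms whose dimension exceeds the range in which the formula $H_{\bullet}(d-1,m-1)$ is even defined. For these boundary values I would argue separately, bounding $|\V(W)|$ directly from the largeness of $\dim\overline{W}$ (very many independent forms force few common zeros) or appealing to the terminal-value formula \eqref{elastr} together with the monotonicity \eqref{Monotone}. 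I expect this top-range analysis, rather than the Zanella reduction itself, to be the genuinely delicate part.
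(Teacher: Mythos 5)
Your argument is the paper's argument: bound each hyperplane section $|\V(W)\cap\Hy|$ by $e_{r-t_W(L)}(d,m-1)$, convert this to $H_{r-t_W}(d-1,m-1)+p_{m-2}$ via the hypothesis \eqref{eq:indhypo}, apply Lemma \ref{lem:Zanella}, and close with Proposition \ref{pro:Hrdm}, part (i) when $t_W=0$ and part (ii) when $t_W=1$ and $r>m+1$. The two subtleties you isolate are genuine, and it is worth knowing that the printed proof passes over both in silence. First, in case (i) every hyperplane has $t_W(L)=0$, so the section bound is the instance $s=r$, which the stated range $1\le s<r$ of \eqref{eq:indhypo} does not literally cover; as you observe, this instance must come from the outer induction on $d+m$ in Lemma \ref{lem:maineq}, whereas the paper simply cites \eqref{eq:indhypo}. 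Second, the range problem you describe for $d=3$ (and for small $m$ when $d=4,5$), where $r$ can exceed $\binom{m+d-2}{d-1}$ so that $H_{r-t_W}(d-1,m-1)$ is not even defined and Proposition \ref{pro:Hrdm} does not apply, is likewise not addressed in the paper's proof; your instinct to dispose of those top values of $r$ separately (the restricted space $\overline{W}$ is then so large that \eqref{elastr} in dimension $m-1$ makes every section tiny) is the right one, though you only sketch it. So: same route as the paper, correct where the paper's own citations are in range, with a legitimate flag on the boundary that the paper does not raise.
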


\begin{proof}
Let $t:=t_W$.
Given any hyperplane $\Hy$ in $\PP^m$, we have $\Hy = \V(L)$ for some $0\ne L\in S_1$.
Now $t_W(L) := \dim (W \cap L S_{d-1}) \le t$, and hence in view of  \eqref{eq:indhypo} and \eqref{Monotone}, we see that
$$
|\V(W) \cap \Hy| \le e_{r-t_W(L)}(d-1, m-1) \le H_{r-t_W(L)}(d-1, m-1) + p_{m-2} \le H_{r-t}(d-1, m-1) + p_{m-2}.
$$
Since $\Hy$ was an arbitrary hyperplane in $\PP^m$, using Lemma \ref{lem:Zanella}, we obtain
$$
|\V(W)| \le q \left(H_{r-t}(d-1, m-1) + p_{m-2}\right) + 1 = qH_{r-t}(d-1, m-1) + p_{m-1}.
$$
Hence the desired result follows from parts (i) and (ii) of Proposition \ref{pro:Hrdm}.
\end{proof}

\section{Completion of the Proof}

In this section we combine the results of the previous sections to prove one of our main results.

\subsection*{Context}
As before, $d,m,r$ are fixed positive integers. As in Conjecture \ref{conj}, we generally assume that $1< d< q$. But the relevant assumptions are specified in the statement of the results.

\begin{lemma}
\label{lem:maineq}
Assume that 
$2<d<q$ and $1\le r \le \binom{m+2}{2}$.
Then \eqref{maineq} holds, that is,
$$
|\V(F_1, \dots , F_r)|  \le H_r(d-1,m) + p_{m-1} \quad \text{for any linearly independent } F_1, \dots , F_r\in S_d.
$$
\end{lemma}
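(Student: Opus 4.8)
The plan is to prove \eqref{maineq} by induction, with the dimension $m$ as the primary induction variable and the degree $d$ as a secondary one. The base case $m=1$ is immediate from \eqref{ermone}: there $e_r(d,1)=d-r+1$, and a direct check gives $H_r(d-1,1)+p_0=(d-r)+1$ as well, so \eqref{maineq} holds (in fact with equality) for all $r\le\binom{3}{2}=3$, since $d\ge 3$ forces $r\le d+1$. For the inductive step at a fixed $m\ge 2$, I would assume that \eqref{maineq} has already been established at dimension $m-1$ for all admissible parameters; this is exactly the hypothesis \eqref{eq:indhypo} that the lemmas of Section~4 require. Simultaneously, as an inner induction on $d$, I would assume \eqref{maineq} at the same $m$ for every smaller degree.

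Now fix linearly independent $F_1,\dots,F_r\in S_d$, set $W=\langle F_1,\dots,F_r\rangle$, and write $G=\gcd(F_1,\dots,F_r)$, $c=\deg G$. Since $r=1$ is Theorem~\ref{thm:SerreSorr}, assume $r>1$, so that $c<d$. The first dichotomy is on whether $c>0$. If $c\in\{d-2,d-1\}$, Proposition~\ref{prop:factor2} yields the bound directly. If $0<c<d-2$, the cofactors $F_i'=F_i/G$ are linearly independent and relatively prime of degree $d-c$ with $3\le d-c<d$; by the inner induction on $d$ we get $|\V(F_1',\dots,F_r')|\le H_r(d-c-1,m)+p_{m-1}$, and then Proposition~\ref{prop:factor1} upgrades this to the desired strict inequality for $W$. (When $G$ has a linear factor, in particular when $c=1$, both propositions reduce to Lemma~\ref{lem:factorL}.) These subcases exhaust all $0<c\le d-1$, disposing of every situation with $c>0$.

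It remains to handle the relatively prime case $c=0$, in which $t:=t_W<r$. Here I would split according to the value of the $t$-invariant and the size of $r$, invoking precisely the lemmas of Section~4: Lemma~\ref{lem5}(i) when $t=0$; Lemma~\ref{lem1} or Lemma~\ref{lem5}(ii) when $t=1$ (according as $r\le m+1$ or $r>m+1$); and Lemma~\ref{lem2}, \ref{lem3}, or \ref{lem4} when $2\le t<r$ (according as $r\le m+1$, or $r>m+1$ with $t\le m+1$, or $r>m+1$ with $t>m+1$). Each of these, granted \eqref{eq:indhypo}, delivers $|\V(W)|\le H_r(d-1,m)+p_{m-1}$, which completes the induction once the cases are assembled.

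The routine but genuinely delicate part — where I expect the real difficulty to lie — is making the $t$-invariant case split of the previous paragraph airtight: one must verify that for every occurring pair $(r,t)$ the reduced index $r-t$ (and $r$ itself, in the $t\le 1$ subcases routed through Lemma~\ref{lem5}) falls inside the range for which the inductive hypothesis \eqref{eq:indhypo} and Proposition~\ref{pro:Hrdm} are actually available. This forces one to track the boundary regions carefully, using the explicit descriptions \eqref{Hvsmallr}, \eqref{Hsmallr}, \eqref{Hmplustwo} of $H_r(d-1,\cdot)$ via the index relation \eqref{rij}, together with the strict monotonicity \eqref{Monotone}. Once one checks that the five lemmas of Section~4 jointly cover all admissible $(r,t)$ with $0\le t<r\le\binom{m+2}{2}$, the statement follows.
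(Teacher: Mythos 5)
Your proposal is correct and follows essentially the same route as the paper: the same dichotomy on $\gcd(F_1,\dots,F_r)$ handled by Propositions \ref{prop:factor1} and \ref{prop:factor2}, and the same $t$-invariant case split via Lemmas \ref{lem1}--\ref{lem5} in the relatively prime case. The only (immaterial) difference is that you run a lexicographic double induction on $(m,d)$ where the paper inducts on $d+m$; both make available exactly the two needed smaller instances, namely $(d-c,m)$ for Proposition \ref{prop:factor1} and $(d,m-1)$ for \eqref{eq:indhypo}.
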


\begin{proof}
We use induction on $d+m$. Note that since $d\ge 3$ and $m\ge 1$, we have $d+m\ge 4$ and if $d+m=4$, then $d=3$ and $m=1$, in which case \eqref{maineq} clearly holds, thanks to \eqref{ermone}. Now assume that $d+m>4$ and that  \eqref{maineq} holds for smaller values of $d+m$. Since
 \eqref{maineq} follows from  \eqref{ermone} when $m=1$ and from Theorem~\ref{thm:SerreSorr} when $r=1$,
 we shall henceforth assume that $m>1$ and $r>1$. 

 Let $F_1, \dots , F_r$ be any linearly independent polynomials in $S_d$. Two cases are possible.

 \medskip
 \noindent
 {\bf Case 1.} $F_1, \dots , F_r$ are not relatively prime, i.e., they have a nonconstant common factor.

 In this case, the hypothesis of Proposition \ref{prop:factor1} is satisfied, thanks to the induction hypothesis. Thus from Propositions \ref{prop:factor1} and \ref{prop:factor2}, we see that \eqref{maineq} holds.

\medskip
 \noindent
 {\bf Case 2.} $F_1, \dots , F_r$ are relatively prime.

In this case, \eqref{eq:indhypo} is satisfied, thanks to the induction hypothesis. Further if we let $W$ be the subspace of $S_d$ spanned by $F_1, \dots , F_r$ and let $t=t_W$, then we have $0\le t< r$. Hence from
Lemmas~\ref{lem2}, \ref{lem3}, and \ref{lem4}, we see that \eqref{maineq} holds when $t\ge 2$, whereas
from Lemmas~\ref{lem1}, and \ref{lem5}, we see that \eqref{maineq} holds when $t \le 1$.
This completes the proof.
\end{proof}

The reverse inequality is easy to deduce from the Heijnen-Pellikaan Theorem.

\begin{lemma}
\label{lem:othereq}
Assume that $1 < d \le q$ and $1 \le r \le \binom{m+d-1}{d-1}$. Then
$$
e_r(d,m) \ge H_r(d-1,m)+p_{m-1}.
$$
\end{lemma}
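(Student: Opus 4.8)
The plan is to prove the reverse inequality by exhibiting an explicit family of $r$ linearly independent polynomials in $S_d$ that attains the claimed bound, building directly on the Heijnen--Pellikaan formula \eqref{Hrdm}. Since $1\le d-1<q$ (because $1<d\le q$) and $1\le r\le \binom{m+d-1}{d-1}=\binom{m+(d-1)}{d-1}$, equation \eqref{Hrdm} applies with $d$ replaced by $d-1$ and yields linearly independent polynomials $f_1,\dots,f_r\in T_{\le d-1}$ with $|\Ze(f_1,\dots,f_r)|=H_r(d-1,m)$.

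First I would homogenize: for each $i$, let $f_i^{h}\in S_{d-1}$ be the degree-$(d-1)$ homogenization of $f_i$ with respect to $X_0$ (multiply each monomial $X^\alpha$ of $f_i$ by $X_0^{(d-1)-|\alpha|}$), and set $F_i:=X_0\, f_i^{h}\in S_d$. These are homogeneous of degree $d$ and each is divisible by $X_0$. Splitting $\PP^m$ into the hyperplane $\V(X_0)$ and its affine complement $\{X_0\ne 0\}\cong \AA^m$, every $F_i$ vanishes identically on $\V(X_0)$, contributing all $p_{m-1}$ of its points, while on the chart $X_0=1$ one has $F_i(1,X_1,\dots,X_m)=f_i(X_1,\dots,X_m)$, so the affine points of the variety are exactly $\Ze(f_1,\dots,f_r)$. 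Hence
$$|\V(F_1,\dots,F_r)|=|\Ze(f_1,\dots,f_r)|+p_{m-1}=H_r(d-1,m)+p_{m-1},$$
and since $e_r(d,m)$ is the maximum in \eqref{erdm} over all such configurations, the desired inequality follows.

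The only point requiring care is the linear independence of $F_1,\dots,F_r$, which is needed for the configuration to be admissible in the definition of $e_r(d,m)$. Homogenization with respect to $X_0$ is the inverse of dehomogenization (setting $X_0=1$) and hence a linear isomorphism $T_{\le d-1}\xrightarrow{\ \sim\ } S_{d-1}$ (both spaces have dimension $\binom{m+d-1}{d-1}$); it therefore carries the linearly independent $f_1,\dots,f_r$ to linearly independent $f_1^{h},\dots,f_r^{h}$. Multiplication by $X_0$ is injective on the domain $S$, so $F_1,\dots,F_r$ remain linearly independent. I expect no genuine obstacle here: the argument is essentially a bookkeeping of the point count across the hyperplane at infinity, with all of the substantive content supplied by the Heijnen--Pellikaan theorem.
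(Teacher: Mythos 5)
Your proposal is correct and follows essentially the same route as the paper: both take a Heijnen--Pellikaan optimal family $f_1,\dots,f_r$, form $F_i = X_0\cdot X_0^{d-1}f_i(X_1/X_0,\dots,X_m/X_0)$, and count the $p_{m-1}$ points of $\V(X_0)$ plus the $H_r(d-1,m)$ affine zeros. Your explicit justification of the linear independence of the $F_i$ fills in a detail the paper dismisses as ``easily seen.''
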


\begin{proof}
Note that $1\le d-1 < q$ and hence by \eqref{Hrdm}, there exist  linearly independent $f_1,\dots,f_r$ in $T_{ \le d-1}$ such that $|\Ze(f_1,\dots,f_r)|=H_r(d-1,m).$ For $1 \le i \le r$, let
$F_i' :=X_0^{d-1}f_i(X_1/X_0, \dots, X_m/X_0)$
and let $F_i:=X_0 F_i'$.
It is easily seen that $F_1,\dots,F_r$ are linearly independent elements of $S_d$ and that $e_r(d,m) \ge |\V(F_1,\dots,F_r)|=H_r(d-1,m)+p_{m-1}$.
\end{proof}

\begin{theorem}\label{thm:main}
Assume that $1 < d < q$ and
$1 \le r\le \binom{m+2}{2}$. Then $e_r(d,m)=H_r(d-1,m)+p_{m-1}$.
\end{theorem}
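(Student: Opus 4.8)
The plan is to establish the claimed equality by proving the two opposite inequalities separately, and to this end almost all of the work has already been carried out in the preceding lemmas. For the lower bound $e_r(d,m) \ge H_r(d-1,m) + p_{m-1}$ I would simply invoke Lemma~\ref{lem:othereq}: since $1 < d \le q$ and, for $H_r(d-1,m)$ to be defined, $r \le \binom{m+d-1}{d-1}$, that lemma exhibits an explicit linearly independent family $F_1, \dots, F_r \in S_d$ (obtained by homogenizing the extremal affine configuration of Heijnen--Pellikaan and multiplying by $X_0$) whose common zero set has exactly $H_r(d-1,m) + p_{m-1}$ points. This half is immediate and uniform in $d$.

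For the reverse (upper bound) inequality the argument splits according to the value of $d$. When $d \ge 3$, the bound $|\V(F_1,\dots,F_r)| \le H_r(d-1,m)+p_{m-1}$ for \emph{every} linearly independent tuple is exactly the assertion of Lemma~\ref{lem:maineq}, so combining it with the lower bound yields $e_r(d,m) = H_r(d-1,m)+p_{m-1}$ at once. The only value of $d$ outside the scope of Lemma~\ref{lem:maineq} is $d=2$; here $H_r(1,m)$ is only defined for $r \le m+1$, and I would appeal directly to Zanella's Theorem~\ref{thm:Z}. For $r \le m+1$ that theorem gives $k=m-1$ and $e_r(2,m) = \lfloor q^{m-r}\rfloor + p_{m-1}$, while specializing \eqref{Hvsmallr} to $d=1$ gives $H_r(1,m) = \lfloor q^{m-r}\rfloor$; the two expressions coincide, which settles the case $d=2$.

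The genuinely difficult part is of course not the assembly above but Lemma~\ref{lem:maineq} itself, which I regard as the crux of the entire result. Its proof runs by induction on $d+m$ and bifurcates on whether $F_1,\dots,F_r$ have a nonconstant common factor. In the non-coprime case one strips off the gcd $G$ of degree $c$, estimating $|\V(G)|$ through the Homma--Kim inequality (Theorem~\ref{thm:HK}) when $G$ has no linear factor and through Lemma~\ref{lem:factorL} when it does, and then reduces to the lower-degree system $F_1',\dots,F_r'$ via Propositions~\ref{prop:factor1} and~\ref{prop:factor2}. In the coprime case, where the $t$-invariant satisfies $t_W < r$, one slices $\PP^m$ by a hyperplane realizing $t_W$, applies the induction hypothesis \eqref{eq:indhypo} to the hyperplane section and the affine Bézout refinement of Lemma~\ref{lem:AffineLachaudrefined} to the complement, and then carries out the delicate case analysis on the size of $t_W$ performed in Lemmas~\ref{lem1}--\ref{lem5}, with Lemma~\ref{lem:Zanella} furnishing the final counting step when $t_W \in \{0,1\}$. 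Since all of this machinery is already in place, the theorem follows by pasting the lower and upper bounds together, with the brief Zanella computation dispatching the boundary value $d=2$.
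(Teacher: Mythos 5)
Your proposal is correct and follows essentially the same route as the paper: the lower bound from Lemma~\ref{lem:othereq}, the upper bound from Lemma~\ref{lem:maineq} for $d\ge 3$, and the case $d=2$ dispatched via Theorem~\ref{thm:Z} together with the identification $H_r(1,m)=\lfloor q^{m-r}\rfloor$ from \eqref{Hvsmallr}. Your description of the inner workings of Lemma~\ref{lem:maineq} (induction on $d+m$, the gcd reduction, and the $t$-invariant case analysis) also matches the paper's argument.
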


\begin{proof}
If $d=2$, then the desired result follows from Theorem \ref{thm:Z} as noted in the last paragraph of Section \ref{sec:Prelim}.
If $d>2$, then it is easily seen that $ \binom{m+2}{2} \le \binom{m+d-1}{d-1}$, and so in this case
the desired result follows from Lemmas \ref{lem:maineq} and \ref{lem:othereq}.
\end{proof}

\section{The case $d=q$}

It may have been noted that several of the 
lemmas and propositions in previous sections are actually valid for $d=q$. Thus one may wonder if Conjecture \ref{conj} actually holds for $d =q$ as well. We will answer this here by showing that a
straightforward analogue of Conjecture \ref{conj} is not valid for $d=q$, in general. More precisely, we will determine $e_r(q,m)$ for $1\le r \le m+1$ and show that 
$$
e_r(q,m) > H_r(q-1,m) + p_{m-1} = (q-2)q^{m-1} + q^{m-r}+ p_{m-1} \quad \text{when } q> 2 \text{ and } 1< r \le m.
$$
Note that the case $d=q=2$ is already covered by Theorem \ref{thm:Z}, and here $e_r(q,m)$ behaves as in
Conjecture \ref{conj}. Likewise, when $d=q$ and $r=1$, thanks to Theorem \ref{thm:SerreSorr}.

\begin{lemma}\label{lem:dq1}
Assume that $1 \le r \le m$. Then $$e_r(q,m) \ge q^m+p_{m-r-1}.$$
\end{lemma}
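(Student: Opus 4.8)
Lemma \ref{lem:dq1} asserts a lower bound: for $1 \le r \le m$, we have $e_r(q,m) \ge q^m + p_{m-r-1}$. Since $e_r(q,m)$ is defined as a maximum over all choices of $r$ linearly independent homogeneous polynomials of degree $q$, the plan is simply to \emph{exhibit} an explicit collection of $r$ linearly independent polynomials $F_1, \dots, F_r \in S_q$ whose common projective zero set has at least $q^m + p_{m-r-1}$ rational points. No upper bound or optimality is needed here, so the entire task is a clever construction together with a point-count.

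**The guiding idea.** The key arithmetic fact over $\Fq$ is that the polynomial $X_i^q - X_i X_0^{q-1}$ (a degree-$q$ homogeneous polynomial) vanishes at a projective point whenever the affine coordinate $X_i/X_0$ lies in $\Fq$, which is automatic for every $\Fq$-rational point in the affine chart $X_0 \ne 0$. More precisely, for each $i$ with $1 \le i \le m$, the polynomial $G_i := X_i^q - X_i X_0^{q-1}$ vanishes at \emph{every} $\Fq$-rational point $P$ with $P_0 \ne 0$, because $(P_i/P_0)^q = P_i/P_0$ in $\Fq$. So the first step is to take $F_i := G_i = X_i^q - X_i X_0^{q-1}$ for $i = 1, \dots, r$. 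These $r$ polynomials are visibly linearly independent (their leading terms $X_i^q$ are distinct monomials). Their common zero set automatically contains the entire affine chart $\{X_0 \ne 0\}$, contributing $q^m$ points. The remaining task is to also capture points at infinity, i.e.\ in the hyperplane $\V(X_0)$, to boost the count by $p_{m-r-1}$.

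**Handling the hyperplane at infinity.** On the hyperplane $X_0 = 0$, each $F_i$ restricts to $X_i^q$, so $\V(F_1, \dots, F_r) \cap \V(X_0)$ is exactly the set of points in $\V(X_0)$ where $X_1 = \dots = X_r = 0$, which is a projective subspace $\PP^{m-r}$ inside $\V(X_0) \cong \PP^{m-1}$, having $p_{m-r}$ rational points. This already gives $|\V(F_1,\dots,F_r)| \ge q^m + p_{m-r}$, which is in fact stronger than the claimed $q^m + p_{m-r-1}$ (since $p_{m-r} \ge p_{m-r-1}$); so this naive construction oversupplies the bound. I would therefore simply verify carefully that these two contributions are disjoint (they are, since one lives in $X_0 \ne 0$ and the other in $X_0 = 0$) and add them. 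The count at infinity is where one must be careful: the exact value $p_{m-r}$ versus the stated $p_{m-r-1}$ suggests the authors may use a slightly different normalization, perhaps to make the bound uniform or to leave room in a subsequent sharper lemma; but for proving the stated inequality, the above suffices comfortably.

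**Main obstacle and verification.** The only genuine subtlety is confirming that $q^m$ counts \emph{all} affine $\Fq$-points correctly and that the points at infinity are counted without double-counting or missing the linear-independence requirement; these are routine but must be stated cleanly. The main conceptual point — and the crux of the whole construction — is the observation that $X_i^q - X_i X_0^{q-1}$ is the homogenization of the Frobenius-vanishing polynomial $x_i^q - x_i$, which vanishes identically on $\Fq$-rational affine points. Once that is in place, the lower bound $e_r(q,m) \ge q^m + p_{m-r} \ge q^m + p_{m-r-1}$ follows by disjoint addition, completing the proof. I would write it as: exhibit the $F_i$, check linear independence via distinct leading monomials, count the affine contribution as $q^m$, count the contribution at infinity as $p_{m-r}$, observe disjointness, and conclude.
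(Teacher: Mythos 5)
Your construction is exactly the one the paper uses ($F_i = X_i^q - X_0^{q-1}X_i$ for $1\le i\le r$, split the count over the affine chart $X_0\ne 0$ and the hyperplane at infinity), and the affine count of $q^m$ is correct. However, your count at infinity is wrong by one dimension, and the false intermediate claim should be flagged. On $\V(X_0)$ each $F_i$ restricts to $X_i^q$, so the points of $\V(F_1,\dots,F_r)\cap\V(X_0)$ are those $(a_0:\dots:a_m)\in\PP^m$ with $a_0=a_1=\dots=a_r=0$. That is $r+1$ independent linear conditions in $\PP^m$ (equivalently, $r$ conditions inside $\V(X_0)\cong\PP^{m-1}$), cutting out a linear subspace of dimension $(m-1)-r=m-r-1$, i.e.\ a $\PP^{m-r-1}$ with $p_{m-r-1}$ points --- not a $\PP^{m-r}$ with $p_{m-r}$ points as you assert. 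Your claimed ``stronger'' bound $|\V(F_1,\dots,F_r)|\ge q^m+p_{m-r}$ is therefore false; the variety has exactly $q^m+p_{m-r-1}$ points, and your speculation that the authors chose $p_{m-r-1}$ for ``normalization'' reasons is mistaken: it is simply the correct count, and indeed Theorem \ref{thm:erdm3} later shows this value is attained with equality for $q\ge 3$, so no larger lower bound is possible.

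The error is localized and the repair is a one-line correction: replace $p_{m-r}$ by $p_{m-r-1}$ in the count at infinity, after which the disjoint union of the two pieces gives $|\V(F_1,\dots,F_r)|=q^m+p_{m-r-1}$ and hence the stated inequality. With that fix your argument coincides with the paper's proof.
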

\begin{proof}
For $1 \le i \le r$, consider $F_i\in S_q$ defined by $F_i:=X_i^q-X_0^{q-1}X_i$. Clearly, $F_1, \dots , F_r$ are linearly independent. 
Writing $\X=\V(F_1,\dots,F_r)$ and $\Hy=\V(X_0)$, we see that
$$
|\X \cap \Hy|=|\V(X_0,X_1^q,\dots,X_r^q)|=\left|\{(a_0:a_1 :\dots: a_m) \in \PP^m : a_i=0 \text{ for } 0\le i \le r 
\}\right|=p_{m-r-1}
$$
and
$$
|\X \cap \Hy^c|=|\Ze(X_1^q-X_1,\dots,X_m^q-X_m)|=q^m,
$$
where $\Hy^c$ denotes the complement of $\Hy$ in $\PP^m$. Thus $e_r(q,m) \ge |\X| = q^m+p_{m-r-1}.$
\end{proof}

We shall now show that the lower bound in Lemma \ref{lem:dq1} is, in fact, the exact value of $e_r(q,m)$ when $q\ge 3$. The technique used 
will be similar to that used in the proof of Theorem~\ref{thm:main}.

\begin{theorem}\label{thm:erdm3}
Assume that  $q\ge3$ and $1 \le r \le m$. Then
$$
e_r(q,m) = q^m + p_{m-r-1}.
$$
\end{theorem}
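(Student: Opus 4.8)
The lower bound $e_r(q,m)\ge q^m+p_{m-r-1}$ is exactly Lemma~\ref{lem:dq1}, so the entire content is the matching upper bound
\[
|\V(W)|\le q^m+p_{m-r-1}\qquad\text{for every $r$-dimensional }W\subseteq S_q,\ 1\le r\le m.
\]
The plan is to prove this by induction on $m$, imitating the architecture of the proof of Theorem~\ref{thm:main}. The base cases $m=1$ and $r=1$ are immediate from \eqref{ermone} and from Theorem~\ref{thm:SerreSorr} respectively (note $e_1(q,m)=q^m+p_{m-2}$), so I may assume $m>1$ and $2\le r\le m$. Two arithmetic facts make everything fit: the identity $q\,p_k+1=p_{k+1}$, and the inequality $H_r(q-1,m)+p_{m-1}\le q^m+p_{m-r-1}$, which follows from \eqref{Hvsmallr} by a short computation and is an equality only for $r=1$.

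First I would dispose of the case in which a basis of $W$ is not relatively prime. Let $G=\gcd$ with $c:=\deg G$; linear independence forces $1\le c<q$, and writing $F_i=GF_i'$ with $F_i'\in S_{q-c}$ I would peel off $G$. Lemma~\ref{lem:factorL} handles the subcase where $G$ has a linear factor, while Lemma~\ref{lem:factorG} reduces the remaining subcases to bounding $e_r(q-c,m)$, for which Theorem~\ref{thm:main} (when $q-c\ge 2$), Theorem~\ref{thm:Z} (the boundary $q-c=2$), and \eqref{ermone} (when $q-c=1$) supply exact values. Combined with \eqref{Hrminust}, all of these collapse to $|\V(W)|\le H_r(q-1,m)+p_{m-1}$, whence the arithmetic inequality above yields the claim.

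The heart of the matter is the relatively prime case, where I would work with the $t$-invariant $t:=t_W<r$. If $t\ge 1$, I choose coordinates so that $t=t_W(X_0)$ and split $\V(W)$ along $\V(X_0)$ and its complement exactly as in Lemma~\ref{lem:basic}: the hyperplane part is at most $e_{r-t}(q,m-1)$, which lies in range (since $1\le r-t\le m-1$) and is known by induction, while the affine part is at most $H_t(q-1,m)$ by Heijnen--Pellikaan \eqref{Hrdm}, because the $t$ generators divisible by $X_0$ dehomogenize to polynomials of degree $\le q-1$. A direct computation with \eqref{Hvsmallr} then shows this sum is $\le q^m+p_{m-r-1}$ (strictly once $t\ge 2$). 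If instead $t=0$ and $r\le m-1$, I would invoke Zanella's incidence lemma (Lemma~\ref{lem:Zanella}): every hyperplane section satisfies $|\V(W)\cap\Hy|\le e_r(q,m-1)=q^{m-1}+p_{m-r-2}$ by induction, so $|\V(W)|\le q(q^{m-1}+p_{m-r-2})+1=q^m+p_{m-r-1}$, using $q\,p_k+1=p_{k+1}$.

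The one configuration this scheme does not settle is $t=0$ together with $r=m$, and I expect this to be the main obstacle. Here the hyperplane sections are governed by $e_m(q,m-1)$, the ``one step beyond'' value in which $r$ exceeds the dimension by one, and this falls outside the inductive range $1\le s\le m-1$; neither the dimensional recursion through Lemma~\ref{lem:basic} nor the Zanella estimate closes, since both keep referring the top value back to an even larger value in lower dimension. The natural remedy is the auxiliary bound $e_m(q,m-1)\le q^{m-1}$, which is precisely the $r=\dim+1$ statement (Theorem~\ref{pro:erdm4}) applied in dimension $m-1$: granting it, $a:=\max_{\Hy}|\V(W)\cap\Hy|\le q^{m-1}<p_{m-1}$ forces $\V(W)\ne\PP^m$, and Lemma~\ref{lem:Zanella} then gives $|\V(W)|\le aq\le q^m=q^m+p_{m-r-1}$, as required. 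Thus I would prove Theorem~\ref{thm:erdm3} in tandem with (or downstream of) the $r=m+1$ result, the genuinely new difficulty being that relatively prime, linear-factor-free families of exactly $\dim+1$ forms of degree $q$ resist the dimensional induction and demand a separate argument.
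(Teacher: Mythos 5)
Your proof is correct, and its core --- induction on $m$, the $t$-invariant dichotomy, Lemma~\ref{lem:Zanella} when $t=0$, and the split along $\V(X_0)$ with the Heijnen--Pellikaan bound $H_t(q-1,m)$ on the affine part when $t\ge 1$ --- is exactly the paper's argument, down to the computation showing $e_{r-t}(q,m-1)+H_t(q-1,m)\le q^m+p_{m-r-1}$. The one substantive point to correct is your diagnosis of the subcase $t=0$, $r=m$ as a genuine obstruction requiring Theorem~\ref{pro:erdm4} in dimension $m-1$: it is not. Since $\V(F_1,\dots,F_m)\subseteq \V(F_1,\dots,F_{m-1})$ and $F_1,\dots,F_{m-1}$ remain linearly independent, one has $e_m(q,m-1)\le e_{m-1}(q,m-1)\le q^{m-1}+p_{-1}=q^{m-1}$ directly from the induction hypothesis at $r=m-1$; this is all that Lemma~\ref{lem:Zanella} needs, combined with your (correct) observation that $a\le q^{m-1}<p_{m-1}$ forces $\V(W)\ne\PP^m$ and hence $|\V(W)|\le aq\le q^m$. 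So there is no need to run the induction in tandem with the $r=m+1$ statement; the paper proves Theorem~\ref{thm:erdm3} self-contained and only afterwards deduces Theorem~\ref{pro:erdm4} from it. Your tandem scheme is not circular (both statements in dimension $m$ would follow from both in dimension $m-1$), but it obliges you to carry the inductive step for Theorem~\ref{pro:erdm4} as well, for no gain. Secondly, your preliminary gcd reduction is superfluous here, unlike in the proof of Theorem~\ref{thm:main}: the paper's case $t\ge 1$ is run for all $1\le t\le r$, where $t=r$ is precisely the common-linear-factor situation (the hyperplane part is then all of $\V(X_0)$ and the bound $q^{m-1}+p_{(m-1)-(r-t)-1}=p_{m-1}$ is still correct), while a common factor with no linear factor affects neither case of the $t$-analysis, since $|\Ze(f_1,\dots,f_t)|\le H_t(q-1,m)$ uses only linear independence and degree. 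Everything else --- the lower bound via Lemma~\ref{lem:dq1}, the base cases via \eqref{ermone} and Theorem~\ref{thm:SerreSorr}, and the arithmetic comparison of $H_r(q-1,m)+p_{m-1}$ with $q^m+p_{m-r-1}$ --- checks out.
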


\begin{proof}
In view of Lemma \ref{lem:dq1}, it suffices to show that
\begin{equation}
\label{eqdm}
e_r(q,m) \le q^m+p_{m-r-1} \quad \text{for} \quad 1 \le r \le m.
\end{equation}
We will prove this using induction on $m$. If $m=1$, then \eqref{eqdm} is an immediate consequence of \eqref{ermone}. Now assume that $m>1$ and that \eqref{eqdm}  holds for smaller values of $m$.
 Let $F_1,\dots,F_r \in S_d$ be any linearly independent polynomials, spanning a linear space $W$. We write $t=t_W$ and without loss of generality, we may assume that $t_W = t_W(X_0)$ and also that
  $X_0 \mid F_i$ for $1 \le i \le t$. We shall write $\X=\V(F_1,\dots,F_r)$ and $\Hy=\V(X_0)$, and divide the proof into two 
  cases as follows. 

\medskip
\noindent
{\bf Case 1: } $t=0$.

Here, using the induction hypothesis and the definition of $t$, we see that
$$|
\X \cap \Hy'| \le q^{m-1}+p_{(m-1)-r-1} \quad \text{for every hyperplane $\Hy'$ in } \PP^m \ \text{defined over} \ \Fq.
$$
Hence from Lemma \ref{lem:Zanella}, we obtain \eqref{eqdm}. 

\medskip
\noindent
{\bf Case 2: } $1 \le t  \le r$.

In this case using the induction hypothesis, we obtain
$$
|\X \cap \Hy| \le q^{m-1}+p_{(m-1)-(r-t)-1}
$$
and since $t\le r\le m$ and $2\le (q-1) < q$, from \eqref{Hrdm} and \eqref{Hvsmallr}, we obtain
$$
|\X \cap \Hy^c| \le |\Ze(f_1,\dots,f_t)| \le H_t(q-1,m)=(q-2)q^{m-1}+ q^{m-t} ,
$$
where $\Hy^c$ denote the complement of $\Hy = \V(X_0)$ in $\PP^m$.
Therefore we have
$$
|\X|  \le  p_{m-r+t-2}+(q-1)q^{m-1}+ q^{m-t}  = q^m+p_{m-r-1}+R,
$$
where
$$
R = p_{m-r+t-2} -p_{m-r-1} - q^{m-1}+ q^{m-t} = \frac{ (q^{t-1}-1) \left(q^{m-r}-q^{m-t+1} + q^{m-t} \right)}{(q-1)} \le 0,
$$
where the last inequality follows since $m-r \le m-t$ and $q\ge 2$. This proves \eqref{eqdm}.
%
\end{proof}

A special case of Theorem \ref{thm:erdm3} is that if $q>2$, then
$e_1(q,m)=q^m+p_{m-2}$, and from \eqref{Hvsmallr}, we see that this equals $H_1(q-1,m)+p_{m-1}$.  
However, when  $q>2$ and $1<r\le m$, by 
substituting $p_{m-r-1} = (q^{m-r}-1)/(q-1)$ and $p_{m-1} = (q^m-1)/(q-1)$, 
an elementary calculation shows that 
$$
\left(q^m+p_{m-r-1} \right) -  (q-2)q^{m-1} -q^{m-r} - p_{m-1} =
\frac{\left( q - 2 \right) \left( q^{m-1} - q^{m-r} \right)}{(q-1)} > 0,
$$
and so $e_r(q,m) > H_r(q-1,m) + p_{m-1}$. Thus, Conjecture \ref{conj} does not hold for $d=q$ in general. 
Perhaps somewhat surprisingly, it turns out that Conjecture \ref{conj} is valid when $r=m+1$ and $d=q$. 
The proof follows a very similar pattern as in Theorem \ref{thm:erdm3}

\begin{theorem}
\label{pro:erdm4}
Assume that $q \ge 3$. Then
$$e_{m+1}(q,m)=(q-1)q^{m-1}+p_{m-2}.$$
\end{theorem}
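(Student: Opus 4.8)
The plan is to prove two matching inequalities. First I would record that the asserted quantity is nothing but $H_{m+1}(q-1,m)+p_{m-1}$: by \eqref{Hvsmallr} with $d=q-1$ and $r=m+1$ one has $H_{m+1}(q-1,m)=(q-2)q^{m-1}+\lfloor q^{-1}\rfloor=(q-2)q^{m-1}$, and since $p_{m-1}=q^{m-1}+p_{m-2}$ this gives $H_{m+1}(q-1,m)+p_{m-1}=(q-1)q^{m-1}+p_{m-2}$. The lower bound $e_{m+1}(q,m)\ge(q-1)q^{m-1}+p_{m-2}$ then follows directly from Lemma \ref{lem:othereq}, whose hypothesis $m+1\le\binom{m+q-1}{q-1}$ holds once $q\ge 3$ because $\binom{m+q-1}{q-1}\ge\binom{m+2}{2}\ge m+1$. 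Thus the real content is the reverse inequality $e_{m+1}(q,m)\le(q-1)q^{m-1}+p_{m-2}$, which I would establish by induction on $m$; the base case $m=1$ is immediate from \eqref{ermone}, which gives $e_2(q,1)=q-1$.

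For the inductive step I would fix linearly independent $F_1,\dots,F_{m+1}\in S_q$ spanning a subspace $W$, set $\X=\V(F_1,\dots,F_{m+1})$ and $t:=t_W$, and after a linear change of coordinates assume $t=t_W(X_0)$; write $\Hy=\V(X_0)$. The choice of tool depends on the size of $t$, and I would distinguish three regimes. When $t=m+1$ the form $X_0$ divides every $F_i$, and Lemma \ref{lem:factorL} gives at once $|\X|\le H_{m+1}(q-1,m)+p_{m-1}=(q-1)q^{m-1}+p_{m-2}$, so this regime is free.

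In the regime $2\le t\le m$ I would split along the maximizing hyperplane: $|\X|=|\X\cap\Hy|+|\X\cap\Hy^c|$. Choosing the basis so that $F_1,\dots,F_t$ is a basis of $W\cap X_0S_{q-1}$, the restrictions of $F_{t+1},\dots,F_{m+1}$ to $\Hy\cong\PP^{m-1}$ stay linearly independent, so $|\X\cap\Hy|\le e_{m+1-t}(q,m-1)$; since $1\le m+1-t\le m-1$, Theorem \ref{thm:erdm3} evaluates this to $q^{m-1}+p_{t-3}$. On the complement, $|\X\cap\Hy^c|\le|\Ze(f_1,\dots,f_t)|$, where $f_i=F_i(1,X_1,\dots,X_m)$ has degree $\le q-1$ and $f_1,\dots,f_t$ are linearly independent (dehomogenization being injective); hence \eqref{Hrdm} and \eqref{Hvsmallr} give $|\X\cap\Hy^c|\le H_t(q-1,m)=(q-2)q^{m-1}+q^{m-t}$. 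Adding, $|\X|\le(q-1)q^{m-1}+\bigl(p_{t-3}+q^{m-t}\bigr)$, and a short check that $p_{t-3}+q^{m-t}\le p_{m-2}$ for every $2\le t\le m$ (comparing the single power $q^{m-t}$ with the block $\sum_{i=t-2}^{m-2}q^i=p_{m-2}-p_{t-3}$) finishes this regime.

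The delicate regime is $t\le 1$, where the naive split above is too weak: for $t=1$ it overshoots the target by a positive multiple of $q^{m-2}$ as soon as $q\ge 3$. Here I would instead bound every hyperplane section uniformly and invoke Lemma \ref{lem:Zanella}. The key observation is that when $t_W\le 1$, every hyperplane $\Hy'=\V(L)$ satisfies $t_W(L)\le 1$, so the restriction map from $W$ to the forms of degree $q$ on $\Hy'$ has image of dimension $\ge m$; selecting $m$ independent forms from the image and using monotonicity of $e_r$ in $r$ shows $|\X\cap\Hy'|\le e_m(q,m-1)$, which by the induction hypothesis equals $(q-1)q^{m-2}+p_{m-3}$. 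Lemma \ref{lem:Zanella} then yields $|\X|\le q\bigl((q-1)q^{m-2}+p_{m-3}\bigr)+1=(q-1)q^{m-1}+p_{m-2}$, completing the induction. I expect this $t=1$ case to be the main obstacle: one must recognize that the hyperplane-by-hyperplane estimate of Lemma \ref{lem:Zanella} is needed precisely here, where the extra factor of $q$ it costs is affordable, whereas for $t\ge 2$ that estimate is too lossy and the direct split is required instead, the switch between the two techniques occurring exactly at the threshold $t=2$.
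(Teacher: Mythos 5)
Your proposal is correct and follows essentially the same route as the paper's proof: induction on $m$, a case split on the $t$-invariant, Zanella's hyperplane-counting lemma for $t\le 1$, and the direct split $|\X|=|\X\cap\Hy|+|\X\cap\Hy^c|$ combined with Theorem \ref{thm:erdm3} and the Heijnen--Pellikaan bound for $t\ge 2$. The only (harmless) deviation is that you treat $t=m+1$ separately via Lemma \ref{lem:factorL}, whereas the paper absorbs it into the $t\ge 2$ case.
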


\begin{proof}
We will show using induction on $m$ that $e_{m+1}(q,m) \le (q-1)q^{m-1}+p_{m-2}.$ When $m=1$, this follows from \eqref{ermone}. 
Assume that $m>1$ and that the inequality holds for smaller values of $m$. Let $F_1,\dots,F_{m+1}$ be any linearly independent
polynomials in $S_q$, and let $W$ be the linear space spanned by them. Write $t=t_W$ and assume without loss of generality that $t=t_W(X_0)$ and also that
  $X_0 \mid F_i$ for $1 \le i \le t$. We shall write $\X=\V(F_1,\dots,F_r)$ and $\Hy=\V(X_0)$, and divide the proof into two cases as follows.

\medskip
\noindent
{\bf Case 1: } $t=0$ or $t=1$.

Using the induction hypothesis, we obtain for any hyperplane $\Hy'$ in $\PP^m$ defined over $\Fq$,
$$
|\X \cap \Hy'| \le |\V(F_2,\dots,F_{m+1}) \cap \Hy'| \le (q-1)q^{m-2}+p_{(m-1)-2}.
$$
Hence using Lemma \ref{lem:Zanella} we obtain $|\X| \le  (q-1)q^{m-1}+p_{m-2}.$

\medskip
\noindent
{\bf Case 2: } $2 \le t \le m+1$.

Here, we can apply Theorem \ref{thm:erdm3}  and it gives
$$
|\X \cap \Hy| \le q^{m-1}+ p_{(m-1)-(m+1-t)-1}=q^{m-1}+p_{t-3}.
$$
Moreover, using \eqref{Hrdm} and \eqref{Hvsmallr}, we obtain
$$
|\X \cap \Hy^c| \le |\Ze(f_1,\dots,f_{t})| \le H_t(q-1,m)=(q-2)q^{m-1}+\lfloor q^{m-t} \rfloor,
$$
where $f_i\in T_{\le m}$ is obtained by putting $X_0=1$ in $F_i$ for $1\le i \le t$ and $\Hy^c = \PP^m \setminus \Hy$.
Hence
$$
|\X|   \le  q^{m-1}+p_{t-3}+(q-2)q^{m-1}+\lfloor q^{m-t} \rfloor
= (q-1)q^{m-1}+p_{t-3}+\lfloor q^{m-t}\rfloor.
$$
Since $2\le t\le m+1$, this implies $|\X| \le  (q-1)q^{m-1}+p_{m-2}$.

It follows  that $e_{m+1}(q,m) \le (q-1)q^{m-1}+p_{m-2}$.
The reverse inequality follows from
Lemma~\ref{lem:othereq}. This completes the proof.
\end{proof}

It is thus seen that the formulas for $e_r(q,m)$ obtained in this section for $1\le r \le m+1$ are of a different kind than those for $e_r(d,m)$ when $d<q$.
The general pattern for $e_r(q,m)$ for $1\le r \le \binom{m+q}{q}$ does not seem clear, even conjecturally. At any rate, it remains an interesting open problem to determine $e_r(d,m)$ for all the remaining values of $r$ and $m$ when $1<d < q$ and also when $d=q$.

\section{Acknowledgements}

The authors would like to gratefully acknowledge the following foundations and institutions:
Peter Beelen is supported by The Danish Council for Independent Research (Grant No. DFF--4002-00367). Mrinmoy Datta is supported by The Danish Council for Independent Research (Grant No. DFF--6108-00362). Sudhir Ghorpade is partially supported by IRCC Award grant 12IRAWD009
from IIT Bombay. 
Also, Peter Beelen would like to thank IIT Bombay where large parts of this work were carried out when he was there in January 2016 as a Visiting Professor. 
Sudhir Ghorpade would like to thank the Technical University of Denmark for a visit of 10 days in June-July 2016 during which this work was completed.  We are also grateful to an anonymous referee for many useful comments and suggestions.

\end{document}